\documentclass[11pt]{amsart}
\usepackage{geometry}
\geometry{letterpaper}
\usepackage{graphicx}
\usepackage{amssymb}
\usepackage{epstopdf}
\usepackage{mathtools}
\usepackage{tikz-cd}
\usepackage{pb-diagram}
\DeclareGraphicsRule{.tif}{png}{.png}{`convert #1 `dirname #1`/`basename #1 .tif`.png}

\usepackage{thmtools, thm-restate}

\newtheorem{theorem}{Theorem}[section]

\newtheorem{lemma}[theorem]{Lemma}
\newtheorem*{theorem*}{Theorem}

\usepackage{chngcntr}
\counterwithin{figure}{section}

\newtheorem{question}{Question}
\newtheorem{problem}[question]{Problem}
\newtheorem{conjecture}[question]{Conjecture}

\DeclareMathOperator{\Mod}{Mod}
\DeclareMathOperator{\Out}{Out}
\DeclareMathOperator{\Aut}{Aut}
\DeclareMathOperator{\Inn}{Inn}
\DeclareMathOperator{\Sp}{Sp}
\DeclareMathOperator{\GL}{GL}
\DeclareMathOperator{\SL}{SL}
\DeclareMathOperator{\SO}{SO}
\DeclareMathOperator{\A}{A}

\setcounter{MaxMatrixCols}{30}

\newtheorem{proposition}[theorem]{Proposition}

\raggedbottom

\title{Generating mapping class groups with elements of fixed finite order}

%\date{}                                           % Activate to display a given date or no date

\author{Justin Lanier}
\address{School of Mathematics\\
Georgia Tech\\
686 Cherry St.\\
Atlanta, GA 30332}
\email{jlanier8@gatech.edu}

\keywords{mapping class groups, permutation groups, finite order, generating sets}

\subjclass[2000]{Primary: 20F65; Secondary: 57M07, 20F05}

\begin{document}

\begin{abstract}
We show that for any $k \geq 6$ and $g$ sufficiently large, the mapping class group of a surface of genus $g$ can be generated by three elements of order $k$. We also show that this can be done with four elements of order $5$. We additionally prove similar results for some permutation groups, linear groups, and automorphism groups of free groups.
\end{abstract}

\maketitle
\section{Introduction}
\addtocontents{toc}{\protect\setcounter{tocdepth}{1}}

In this paper, we construct small generating sets for groups where all of the generators have the same finite order. Our main result is about $\Mod(S_g)$, the mapping class group of a closed, connected, and orientable surface of genus $g$.

\begin{theorem}
\label{thm:1}
Let $k \geq 6$ and $g \geq (k-1)^2+1$. Then $\Mod(S_g)$ is generated by three elements of order $k$. Also, $\Mod(S_g)$ is generated by four elements of order $5$ when $g \geq 8$.
\end{theorem}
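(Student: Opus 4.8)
The plan is to build $S_g$ with a large cyclic symmetry and then trade Dehn twists for periodic elements. First I would produce, for every $g \ge (k-1)^2+1$, a periodic mapping class $\rho$ of $S_g$ of order exactly $k$ whose conjugation action is combinatorially transparent: $\rho$ should permute the curves of a configuration $\mathcal{C}=\{c_i\}$ that fills $S_g$, so that the twists $\{T_{c_i}\}$ generate $\Mod(S_g)$ (invoking Humphries' or Lickorish's finite Dehn-twist generating sets) while $\mathcal{C}$ is a union of only a bounded number of $\langle\rho\rangle$-orbits. Concretely I would model $S_g$ as a symmetric ``star'' of $k$ isomorphic handled pieces attached to a central piece, with $\rho$ rotating the pieces, and check via Riemann--Hurwitz that the inequality $g\ge(k-1)^2+1$ is exactly what is needed for such a model, carrying the right orbit-combinatorics, to exist. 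The output of this first stage would be a statement of the form $\Mod(S_g)=\langle\rho,T_a,T_b\rangle$ with $\rho$ of order $k$, where the conjugates $\rho^i T_a\rho^{-i}$ and $\rho^i T_b\rho^{-i}$ already exhaust a generating set of twists.

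The heart of the argument is to eliminate the Dehn twists in favor of finite-order elements. The mechanism I would use is to express each of $T_a$ and $T_b$ as a product of two conjugates of $\rho$: look for a $\rho$-invariant curve configuration on which a standard relation among Dehn twists (a chain relation, or a degenerate version of the lantern or star relation) specializes, after capping boundary, to an identity of the shape $\rho'=\rho\cdot T_a^{\pm 1}\cdot(\text{twists already available})$, where $\rho'$ is again periodic of order $k$ --- indeed a conjugate of $\rho$ by an explicit element of $\langle\rho,T_a,T_b\rangle$. This puts $T_a$, and likewise $T_b$, into the group generated by $\rho$ together with a couple more conjugates of $\rho$, and arranging those conjugates to overlap brings the total count down to three elements of order $k$. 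The construction needs $k\ge 6$ because the relation producing ``rotation $=$ rotation $\times$ twist'' requires the $\rho$-orbit on the relevant curves to be long enough --- equivalently, enough room in the quotient orbifold; $k=5$ is the borderline case, where one degree of freedom is lost and is recovered by permitting a fourth generator of order $5$, and the same slack relaxes the genus requirement to $g\ge 8$. Assembling, $\Mod(S_g)=\langle\rho,T_a,T_b\rangle=\langle\rho,\rho',\rho''\rangle$ with each factor of order $k$ for $k\ge 6$ and $g\ge(k-1)^2+1$, and the parallel four-element computation handles $k=5$, $g\ge 8$.

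I expect the main obstacle to be this second stage: simultaneously (i) finding a $\rho$-symmetric surface and curve configuration on which a known Dehn-twist relation degenerates to ``rotation $=$ rotation $\times$ twist'', and (ii) controlling that only three (respectively four) such periodic elements are needed to recover a full Dehn-twist generating set. By contrast, the existence of $\rho$ via Riemann--Hurwitz and the passage from a Dehn-twist generating set to $\langle\rho,T_a,T_b\rangle$ should be comparatively routine, modulo careful bookkeeping of $\langle\rho\rangle$-orbits.
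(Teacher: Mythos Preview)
Your first stage is where the plan breaks. You want a single periodic element $\rho$ of order $k$ and a filling curve system $\mathcal{C}$ whose Dehn twists generate $\Mod(S_g)$, while $\mathcal{C}$ decomposes into a \emph{bounded} number of $\langle\rho\rangle$-orbits. But Humphries' theorem says any generating set of Dehn twists has at least $2g+1$ elements, and each $\langle\rho\rangle$-orbit has size at most $k$. With $k$ fixed and $g\to\infty$, the number of orbits is at least $(2g+1)/k$, which is unbounded. So $\Mod(S_g)=\langle\rho,T_a,T_b\rangle$ cannot be obtained in the way you describe; there is no single $\rho$ whose conjugation action sweeps out a twist generating set from boundedly many twists. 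The bound $g\ge(k-1)^2+1$, incidentally, is not a Riemann--Hurwitz computation but the Frobenius coin problem: it guarantees $g=ak+b(k-1)$ with $a\ge1$, which is what the symmetric surface model needs.

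The paper's solution to this obstruction is the crucial idea you are missing: rather than one rotation, it takes \emph{several different} conjugates of the same rotation $r$ of the symmetric surface $\Sigma_g$, namely $f=\hat f^{-1}r\hat f$, $g=\hat g^{-1}r\hat g$, $h=\hat h^{-1}r\hat h$, where $\hat f,\hat g,\hat h:S_g\to\Sigma_g$ are distinct homeomorphisms chosen so that these conjugate rotations act very differently on the Humphries curves. Together (not individually) they place all Humphries curves in a single orbit. Simultaneously, $\hat f,\hat g,\hat h$ are designed so that $f,g,h$ map specific lantern curves to specific lantern curves; a rearranged lantern relation then expresses a Dehn twist as a product in $f,g,h$ and one further conjugate $T_{\gamma_2}fT_{\gamma_2}^{-1}$, giving four order-$k$ generators. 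For $k\ge6$ the role of $h$ can be absorbed into a word in $f$ and $g$, cutting to three. Your proposed ``rotation $=$ rotation $\times$ twist'' via chain or star relations is not the mechanism used, and you would still need the multiple-conjugate-rotations idea to make any such relation do the orbit bookkeeping.
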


Theorem \ref{thm:1} follows from a stronger but more technical result that we prove as Theorem \ref{thm:maintheorem}. Our generating sets for $\Mod(S_g)$ are constructed explicitly. In addition, the elements in any particular generating set are all conjugate to each other. Of course, attempting to construct generating sets consisting of elements of a fixed order $k$ only makes sense if $\Mod(S_g)$ contains elements of order $k$ in the first place. In the proof of Lemma \ref{thm:mainlemma}, we construct an element of any fixed order $k$ in $\Mod(S_g)$ whenever $g$ is sufficiently large.

We describe below prior work by several authors on generating $\Mod(S_g)$ with elements of small fixed finite order. Set alongside this prior work, a new phenomenon that emerges in our results is that the sizes of our generating sets for $\Mod(S_g)$ are not only independent of the genus of the surface, but they are also independent of the order of the elements.

After proving our results concerning mapping class groups, we prove analogous theorems for groups belonging to several other infinite families, each indexed by a parameter $n$:

\begin{itemize}
    \item $\A_n$, the alternating group on $n$ symbols,
    \item $\Aut^+(F_n)$, the special automorphism group of the free group of rank $n$,
    \item $\Out^+(F_n)$, the special outer automorphism group of the free group of rank $n$,
    \item $\SL(n,\mathbb{Z})$, the integral special linear group of $n \times n$ matrices,
    \item $\Sp(2n,\mathbb{Z})$, the integral symplectic group of $2n \times 2n$ matrices.
\end{itemize}

Note that $\A_n$, $\Aut^+(F_n)$, $\Out^+(F_n)$, and $\SL(n,\mathbb{Z})$ are index 2 subgroups of the symmetric group $\Sigma_n$, the automorphism group of a free group $\Aut(F_n)$, the outer automorphism group of a free group $\Out(F_n)$, and the integral general linear group $\GL(n,\mathbb{Z})$, respectively. Similarly, $\Mod(S_g)$ is an index 2 subgroup of the extended mapping class group $\Mod^\pm(S_g)$, which includes orientation-reversing mapping classes. Since we are interested in generating groups with elements of some fixed finite order, it is more natural to avoid the parity issues that arise in generating these supergroups. Adding a single orientation-reversing element to any of the generating sets we produce will promote it to a generating set for its respective supergroup.

Our theorems for these other families of groups are as follows:

\begin{theorem}
\label{thm:perm}
Let $k \geq 3$ and $n \geq k$. Then three elements of order $k$ suffice to generate $\Sigma_n$ when $k$ is even and to generate $\A_n$ when $k$ is odd. Further, let $k \geq 3$ and $n \geq k+2$. Then four elements of order $k$ suffice to generate $\A_n$ when $k$ is even.
\end{theorem}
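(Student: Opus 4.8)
The plan is to reduce everything to two classical generating sets: $\Sigma_n$ is generated by the adjacent transpositions $(1\,2),(2\,3),\dots,(n-1\,n)$, and $\A_n$ is generated by the adjacent $3$-cycles $(1\,2\,3),(2\,3\,4),\dots,(n-2\,n-1\,n)$. So in each case it is enough to exhibit, inside the group $G$ generated by our chosen order-$k$ elements, a single one of these small-support permutations together with enough elements of $G$ that conjugate it to every adjacent pair (resp.\ triple). Note already that when $k$ is odd every element of order $k$ is an even permutation (its cycles all have odd length), so $G\le\A_n$ automatically --- this is why the odd case targets $\A_n$ --- whereas for $k$ even a $k$-cycle is odd, which is precisely what will allow $G$ to reach the larger group $\Sigma_n$.

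I would take the generators to be products of pairwise disjoint $k$-cycles supported on a system of length-$k$ intervals in $\{1,\dots,n\}$ that overlap consecutively and wrap around (when $k$ is even one may also attach a disjoint transposition to a single $k$-cycle, since the least common multiple of $k$ and $2$ is $k$), arranged so that each generator has order exactly $k$. The consecutive overlaps and the wrap-around make $G$ transitive; more importantly they let suitable products and powers of the generators act as an ``index shift'' by one position and by a whole block, and conjugating by these shifts is how a single small-support element gets spread to all adjacent positions. To manufacture that single element, I would design two of the generators to agree away from one interval, so that their quotient telescopes: for instance the product of the $k$-cycle on $\{1,\dots,k\}$ with the inverse of the $k$-cycle on $\{2,\dots,k{+}1\}$ is the $3$-cycle $(1,2,k{+}1)$, and a local variant of this makes a short word in the generators collapse to a single $3$-cycle when $k$ is odd; when $k$ is even one instead extracts the attached transposition. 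Feeding this $3$-cycle (resp.\ transposition) through the shifts yields all adjacent $3$-cycles (resp.\ transpositions), hence $\A_n$ (resp.\ $\Sigma_n$). The parity bookkeeping is what distinguishes the last clause: to land in $\A_n$ when $k$ is even, every generator must be an even permutation, which constrains the construction enough that one must spend a fourth generator and needs two extra points, accounting for the hypothesis $n\ge k+2$.

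An alternative endgame would be to show that $G$ is transitive and primitive and contains a transposition (resp.\ a $3$-cycle) and then invoke Jordan's theorem; the ``translate to all adjacent positions'' route is preferable because it sidesteps verifying primitivity. The crux of the argument is the tension between \emph{localization} --- forcing a word in the generators down to a single $3$-cycle or transposition, which pushes the generators to share most of their cycle structure --- and \emph{transitivity with a usable shift action} --- which pushes them to differ enough to move points across every block; reconciling these while keeping all generators of order exactly $k$, together with checking the wrap-around interval and dispatching the small values $n\in\{k,k{+}1,k{+}2\}$ by hand, is where the real work lies.
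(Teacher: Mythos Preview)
Your outline identifies the right ingredients---produce a small-support element and then spread it by conjugation---but the step you flag as ``where the real work lies'' is a genuine gap, not a routine detail. The tension you describe is exactly the obstacle: if two of your three generators agree away from one block so that their quotient collapses to a single $3$-cycle, then together they contribute almost nothing toward moving points between blocks, and the entire burden of producing a shift-by-one across $\{1,\dots,n\}$ falls on the remaining generator, which is merely a product of $\lfloor n/k\rfloor$ disjoint $k$-cycles. It is not clear that three order-$k$ elements can be arranged to achieve this; note too that your sample $3$-cycle $(1\,2\,k{+}1)$ is not adjacent, so further conjugations are needed before the adjacent-$3$-cycle generating set is in reach. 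In the even-$k$ case your plan to ``extract the attached transposition'' is also problematic: powering an element of shape $(\text{transposition})\times(k\text{-cycle})$ does not isolate a single transposition, since a $k$-cycle raised to the $k/2$ power is itself a product of $k/2$ transpositions.

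The paper takes precisely the Jordan-theorem route you set aside, and this choice is what dissolves the tension. For $n\le 2k-1$ it quotes Miller's two-$k$-cycle result. For $n\ge 2k$ it lets $a$ and $b$ be maximal products of consecutive step $k$-cycles, staggered so that together they act transitively, and takes $c$ to be a \emph{single} $k$-cycle on $\{0,\dots,k-1\}$ with its first two entries swapped. The $3$-cycle arises not as a quotient but as the commutator $[a,c]=(0\,1\,k{-}2)$, exploiting that $c$ has small support and differs from the first cycle of $a$ in just one position. Primitivity is then checked via $2$-transitivity, which is a much weaker target than a global shift-by-one action and follows directly from the overlap pattern of $a$, $b$, $c$. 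For the last clause ($k$ even, target $\A_n$) the paper adjoins two new points, multiplies each odd generator by the transposition swapping them, and adds one further order-$k$ element mixing the new points in---hence four generators and the hypothesis $n\ge k+2$.
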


\begin{theorem}
\label{thm:others}
Let $k \geq 5$ and $n \geq 2(k-1)$. Then $\Aut^+(F_n)$, $\Out^+(F_n)$, and $\SL(n,\mathbb{Z})$ are each generated by eight elements of order $k$. When $k \geq 6$ and $n \geq 2k$, seven elements of order $k$ suffice.
\end{theorem}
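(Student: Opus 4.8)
The plan is to present each of $\SL(n,\mathbb{Z})$, $\Aut^+(F_n)$, and $\Out^+(F_n)$ by a small, explicit generating set built from (i) a permutation-type subgroup and (ii) a single elementary transvection, and then to rewrite each of these generators as a product of a bounded number of elements of order $k$. For $\SL(n,\mathbb{Z})$ with $n\ge 3$ I would use that the even permutation matrices together with $E_{12}$ generate: conjugating $E_{12}$ by even permutation matrices produces every $E_{ij}$ (using $2$-transitivity of $\A_n$ when $n\ge 4$, and commutators $[E_{ab},E_{bc}]=E_{ac}$ when $n=3$), and the $E_{ij}$ generate $\SL(n,\mathbb{Z})$. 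The analogous statement holds for $\Aut^+(F_n)$ and $\Out^+(F_n)$ with the $E_{ij}$ replaced by Nielsen transvections $x_i\mapsto x_ix_j$ and the permutation matrices by even permutations of a free basis. I would either run the three arguments in parallel on one combinatorial skeleton, or carry everything out in $\Aut^+(F_n)$ and push forward along the canonical surjections $\Aut^+(F_n)\twoheadrightarrow\Out^+(F_n)$ and $\Aut^+(F_n)\twoheadrightarrow\SL(n,\mathbb{Z})$; in the latter case one must check that the constructed order-$k$ elements do not drop in order on passing to the quotients, which is why one wants the construction explicit enough to read off the actions on abelianization and modulo inner automorphisms.

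For the permutation-type part I would invoke Theorem \ref{thm:perm} applied to the alternating group on the coordinates: it yields three elements of order $k$ generating $\A_n$ when $k$ is odd (and $n\ge k$) and four when $k$ is even (and $n\ge k+2$), and a permutation in $\A_n$ of order $k$ is realized by a permutation matrix of order $k$ and determinant $1$. Since these three or four order-$k$ permutation matrices already move the coordinates transitively enough to recover all $E_{ij}$ from $E_{12}$ by conjugation, they — together with whatever factors are used for the transvection — generate $\SL(n,\mathbb{Z})$; the same recipe applies verbatim to the basis-permutation part of $\Aut^+(F_n)$ and $\Out^+(F_n)$. This is the source of the ``three versus four'' split and of the size constraints inherited from Theorem \ref{thm:perm}.

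The crux is to write a single transvection, say $E_{12}$ (respectively $x_1\mapsto x_1x_2$), as a product of as few elements of order $k$ as possible. Inside each of our groups sits a copy of $\SL(2,\mathbb{Z})$ acting on the coordinates $1,2$ (respectively on the rank-$2$ free factor $\langle x_1,x_2\rangle$), in which $E_{12}=S^{-1}(ST)$ is a product of an element of order $4$ and an element of order $6$. One cannot simply ``pad'' these two factors up to order $k$ by forming direct sums with a single auxiliary torsion element supported on the remaining $n-2$ coordinates and arranging the auxiliary pieces to cancel: that would force both $4\mid k$ and $6\mid k$, hence $12\mid k$; and within $\SL(2,\mathbb{Z})$ alone an abelianization obstruction shows that the transvection is not a product of elements of any single fixed order. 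The remedy I envision is a slightly longer, explicitly exhibited product whose factors are companion-matrix-type rotations of order exactly $k$, each supported on a block of $k-1$ coordinates and having determinant $(-1)^{k-1}$, so that two such blocks are needed — which is exactly what produces the hypothesis $n\ge 2(k-1)$, together with the floor $k\ge 5$, for a decomposition into four factors; a more economical arrangement of the blocks, available once $k\ge 6$ and $n\ge 2k$, reduces this to three factors. Adding the three or four permutation-type generators then gives eight elements of order $k$ in general (as $4+4$) and seven in the improved range.

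The step I expect to be the main obstacle is precisely this short decomposition of a transvection into elements of order exactly $k$, uniformly in $k$: the ``generic'' torsion visible in these groups has order $4$ or $6$, so for general $k$ one genuinely must replace the $\SL(2,\mathbb{Z})$-level identity by a different construction using order-$k$ rotations, controlling simultaneously both the number of factors and the order of each factor. By comparison, the remaining points — transitivity of $\A_n$ on coordinate pairs, preservation of orders under the quotient maps among the three groups, and the boundary values $k=5$ and $k=6$ appearing as thresholds — should be routine.
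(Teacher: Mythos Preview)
Your high-level decomposition---a permutation part coming from $\A_n$ via Theorem~\ref{thm:perm}, plus a transvection part---matches the paper's strategy exactly, and your remarks about pushing forward along the surjections with torsion-free kernel are correct. The gap is entirely in your transvection half. You correctly isolate ``write a single transvection as a short product of order-$k$ elements'' as the crux, but you do not actually produce such a decomposition; your companion-block sketch is speculative, and you yourself flag it as the main obstacle. Note also that for $\Aut^+(F_n)$ one needs both a left and a right Nielsen transvection, since basis permutations conjugate left transvections to left transvections and right to right; a single $x_1\mapsto x_1x_2$ together with $\A_n$ does not generate.

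The paper resolves the transvection step by a route you did not anticipate: it never decomposes a transvection directly. Instead it uses the embedding $\Mod(S_{g,1})\hookrightarrow \Aut^+(\pi_1(S_{g,1}))\cong\Aut^+(F_{2g})$, under which Dehn twists map to Nielsen transvections. The order-$k$ rotations from Theorem~\ref{thm:maintheorem} each fix a point on the surface, so the four elements of order $k$ generating $\Mod(S_{k-1})$ lift to four elements of order $k$ generating $\Mod(S_{k-1,1})$; their images in $\Aut^+(F_{2(k-1)})$ are then four order-$k$ elements whose products already include both a left and a right transvection. This is precisely where the bound $n\ge 2(k-1)$ comes from---it is the rank of $\pi_1(S_{k-1,1})$, not a block-size or determinant consideration---and the sharpening to three elements for $k\ge 6$, $n\ge 2k$ comes from using $\Mod(S_{k,1})$ in place of $\Mod(S_{k-1,1})$. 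One then passes to larger $n$ via the inclusions $\Aut(F_m)\hookrightarrow\Aut(F_{m+1})$. So the ``four'' (respectively ``three'') in the $4+4$ and $4+3$ counts are not the length of a factorization of a transvection, but the number of mapping-class generators being imported wholesale.
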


\begin{theorem}
\label{thm:Sp}
Let $k \geq 5$ and $n \geq 2(k-1)(k-3)$. Then $\Sp(2n,\mathbb{Z})$ is generated by four elements of order $k$. When $k \geq 6$ and $n \geq 2((k-1)^2+1)$, then three elements of order $k$ suffice.
\end{theorem}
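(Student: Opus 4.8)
The plan is to deduce Theorem~\ref{thm:Sp} from the mapping class group results by pushing a generating set forward along the symplectic representation. Recall that the action of $\Mod(S_m)$ on $H_1(S_m;\mathbb{Z}) \cong \mathbb{Z}^{2m}$ preserves the algebraic intersection pairing and so induces a homomorphism $\Psi_m \colon \Mod(S_m) \to \Sp(2m,\mathbb{Z})$, which is surjective: the image of a Dehn twist about a simple closed curve $\gamma$ is the transvection $x \mapsto x + \langle x,[\gamma]\rangle[\gamma]$, and transvections along a symplectic basis generate $\Sp(2m,\mathbb{Z})$. Given a generating set $\{f_1,\dots,f_r\}$ of $\Mod(S_m)$ consisting of elements of order $k$, the set $\{\Psi_m(f_1),\dots,\Psi_m(f_r)\}$ generates $\Sp(2m,\mathbb{Z})$, has $r$ elements each of order dividing $k$, and — since the $f_i$ may be taken pairwise conjugate, as in Theorem~\ref{thm:1} — consists of pairwise conjugate elements, hence elements all of one and the same order. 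So it suffices to produce, for the appropriate value $m=n$, such a generating set in which one (equivalently every) generator acts on homology with order exactly $k$.

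First I would examine the explicit periodic mapping classes appearing in the generating sets of Theorem~\ref{thm:maintheorem}, which are built from the order-$k$ element of Lemma~\ref{thm:mainlemma}. For a periodic mapping class $f$ of order $k$, the rational homology $H_1(S_m;\mathbb{Q})$ is a $\mathbb{Q}[\mathbb{Z}/k]$-module whose isotypic decomposition is governed by the Chevalley--Weil formula in terms of the genus of the quotient orbifold $S_m/\langle f\rangle$ and the rotation data at its cone points. In these terms $\Psi_m(f)$ has order exactly $k$ precisely when the $k$-th cyclotomic polynomial $\Phi_k(t)$ divides the characteristic polynomial of $\Psi_m(f)$, i.e.\ as soon as a free $\mathbb{Q}[\mathbb{Z}/k]$-summand — or, more modestly, a single eigenvalue that is a primitive $k$-th root of unity — occurs in $H_1(S_m;\mathbb{Q})$. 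The key point is that when $m$ is large enough, the quotient orbifold has enough genus, or enough cone points of order $k$, to force $\Phi_k(t)\mid\mathrm{char}(\Psi_m(f))$, pushing the homological order up to $k$; I expect the main obstacle to be exactly this step — pinning down the quotient data of the periodic generators from Theorem~\ref{thm:maintheorem} and running the eigenvalue count — and this is also where the doubling of the genus thresholds relative to Theorem~\ref{thm:1} comes from, i.e.\ the reason the bounds read $n \geq 2\bigl((k-1)^2+1\bigr)$ and $n \geq 2(k-1)(k-3)$ rather than $n \geq (k-1)^2+1$ and $n \geq (k-1)(k-3)$.

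With this in hand the theorem follows by a casework that mirrors the mapping class group statements: for $k \geq 6$ and $n \geq 2\bigl((k-1)^2+1\bigr)$, feed the three-element generating set of $\Mod(S_n)$ (valid already for $n \geq (k-1)^2+1$, but applied in the doubled range so that the homological order is $k$) through $\Psi_n$ to get three elements of order $k$ generating $\Sp(2n,\mathbb{Z})$; for $k \geq 5$ and $n \geq 2(k-1)(k-3)$, feed in the four-element version instead — the $k=5$ case from Theorem~\ref{thm:1} and its larger-$k$ analogue internal to Theorem~\ref{thm:maintheorem}. Should transporting the order along $\Psi_n$ prove delicate, a fallback is to imitate the mapping class group argument directly inside $\Sp(2n,\mathbb{Z})$: exhibit one matrix of order $k$ in $\Sp(2n,\mathbb{Z})$ whose characteristic polynomial is divisible by $\Phi_k(t)$ (for instance one realizing a suitable $\mathbb{Z}/k$-action on a surface), and then show that three or four of its conjugates generate, using that $\Sp(2n,\mathbb{Z})$ is generated by transvections. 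But I would pursue the pushforward route first, since it recycles the work already done for $\Mod(S_g)$ almost verbatim.
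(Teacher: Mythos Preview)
Your overall strategy---push the generating set of $\Mod(S_n)$ forward along the surjection $\Psi_n\colon \Mod(S_n)\to\Sp(2n,\mathbb{Z})$---is exactly what the paper does. But you have missed the one fact that turns this into a one-line proof: the kernel of $\Psi_n$, the Torelli group $\mathcal{I}_n$, is \emph{torsion-free} (Farb--Margalit, \emph{Primer}, Theorem~6.8). This immediately forces $\Psi_n(f)$ to have order exactly $k$ whenever $f$ does: if $\Psi_n(f)$ had order a proper divisor $d$ of $k$, then $f^d$ would be a nontrivial finite-order element of the kernel, contradicting torsion-freeness. No Chevalley--Weil formula, no eigenvalue count, no cone-point bookkeeping is needed; the order is preserved automatically. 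That is the paper's entire proof.

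Your speculation about the doubled thresholds is therefore mistaken. The torsion-freeness argument applies at every $n$ for which Theorem~\ref{thm:maintheorem} produces order-$k$ generators of $\Mod(S_n)$, so nothing in the argument forces $n \geq 2(k-1)(k-3)$ rather than $n \geq (k-1)(k-3)$; the factors of $2$ in the stated bounds are not explained by any homological-order obstruction, and the paper's own proof makes no use of them. (Indeed, the paper already remarks in the proof of Lemma~\ref{thm:mainlemma} that the explicit rotations act on homology with order exactly $k$, with no extra genus hypothesis.) Your proposed Chevalley--Weil route could in principle be pushed through for these particular periodic maps, but it is vastly more work than required and rests on a false premise about the origin of the factor of $2$.
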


We mention that the proof of Theorem \ref{thm:others} additionally shows that there is a straightforward extension of our mapping class group results to $\Mod(S_{g,1})$, the mapping class groups of once-punctured surfaces.

\subsection*{Outline of the paper}
In Section 2 we show how to construct elements of order $k$ in $\Mod(S_g)$ for sufficiently large values of $g$.
From here, our strategy for constructing generating sets for $\Mod(S_g)$ unfolds as follows. Humphries \cite{Humphries} showed that the Dehn twists about the $2g+1$ curves in Figure \ref{fig:Hum} generate $\Mod(S_g)$. In order to show that a collection of elements generates $\Mod(S_g)$, it suffices to show two things: that a Dehn twist about some Humphries curve can be written as a product in these elements, and that all $2g+1$ Humphries curves are in the same orbit under the subgroup generated by these elements. If these two things hold, it follows that the Dehn twist about each of the Humphries curves may be written as a product in the elements, and therefore the elements generate $\Mod(S_g)$. We show how to write a Dehn twist as a product in elements of order $k$ in Section 3, and we construct elements of order $k$ that generate a subgroup under which all Humphries curves are in the same orbit in our proof of Theorem \ref{thm:maintheorem} in Sections 4 and 5.

Our results about permutation groups are independent of the mapping class group result, and we prove these in Section 6. In Section 7 we combine our results for $\Mod(S_g)$ and $\A_n$ to obtain our result for $\Aut^+(F_n)$. This result in turn yields the theorems for $\Out^+(F_n)$ and $\SL(n, \mathbb{Z})$. Finally, our result for $\Sp(2n,\mathbb{Z})$ follows directly from our mapping class group result. We close in Section 8 with some further questions.

\begin{figure}[ht]
\centering
\includegraphics[width=\textwidth]{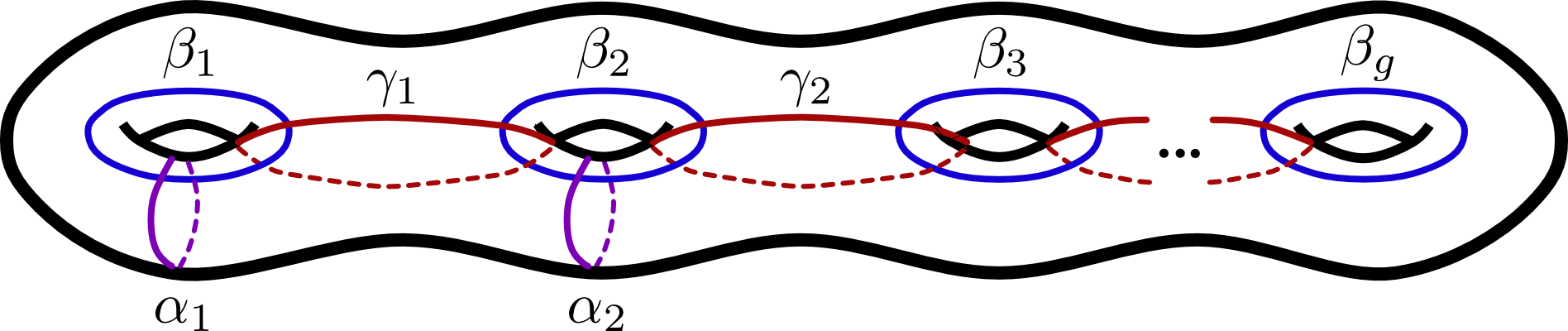}
\caption{The $2g+1$ Humphries curves in $S_g$.}
\label{fig:Hum}
\end{figure}
\subsection*{Background and prior results for \boldmath{$\Mod(S_g)$}} 
Let $S_g$ be a closed, connected, and orientable surface of genus $g$. The mapping class group $\Mod(S_g)$ is the group of homotopy classes of homeomorphisms of $S_g$. The most commonly-used generating sets for $\Mod(S_g)$ consist of Dehn twists, which have infinite order. Dehn \cite{Dehn} showed that $2g(g-1)$ Dehn twists generate $\Mod(S_g)$, and Lickorish \cite{Lickorish} showed that $3g+1$ Dehn twists suffice. Humphries \cite{Humphries} showed that only $2g+1$ Dehn twists are needed, and he also showed that no smaller set of Dehn twists can generate $\Mod(S_g)$.

There have also been many investigations into constructing generating sets for $\Mod(S_g)$ that include or even consist entirely of periodic elements. For instance, Maclachlan \cite{Maclachlan} showed that $\Mod(S_g)$ is normally generated by a set of two periodic elements that have orders $2g+2$ and $4g+2$, and McCarthy and Papadopoulos \cite{McPap} showed that $\Mod(S_g)$ is normally generated by a single involution (element of order 2) for $g \geq 3$. Korkmaz \cite{Korkmaz} showed that $\Mod(S_g)$ is generated by two elements of order $4g+2$ for $g \geq 3$.

Luo \cite{Luo} constructed the first finite generating sets for $\Mod(S_g)$ where the generators all have the same finite order, and where this order is independent of $g$. Luo's generating sets consisted of $6(2g+1)$ involutions, given that $g \geq 3$. Luo asked whether there exists a universal upper bound for the number of involutions required to generate $\Mod(S_g)$. Brendle and Farb \cite{Brendle2004187} showed that six involutions suffice to generate $\Mod(S_g)$, again for $g \geq 3$. Kassabov \cite{Kassabov} sharpened this result by showing that only five involutions are needed for $g \geq 5$ and only four are needed for $g \geq 7$. Monden \cite{PSP:8772025} showed that $\Mod(S_g)$ can be generated by three elements of order 3 and by four elements of order 4, each for $g \geq 3$. Recently Yoshihara \cite{Yoshihara} has shown that $\Mod(S_g)$ can be generated by three elements of order 6 when $g \geq 10$ and by four elements of order 6 when $g \geq 5$.

Much work has been done to establish when elements of a particular finite order exist in $\Mod(S_g)$. In his paper, Monden noted that for all $g \geq 1$, $\Mod(S_g)$ contains elements of orders 2, 3, and 4. Aside from order 6, elements of larger orders do not always exist in $\Mod(S_g)$. For example, $\Mod(S_3)$ contains no element of order 5 and $\Mod(S_4)$ contains no element of order 7.

Determining the orders of the periodic elements in $\Mod(S_g)$ for any particular $g$ is a solved problem, at least implicitly. In fact, this is even true for the determining the conjugacy classes of periodic elements in $\Mod(S_g)$. Ashikaga and Ishizaka \cite{ashikaga2002} listed necessary and sufficient criteria for determining the conjugacy classes in $\Mod(S_g)$ for any particular $g$. The criteria are number theoretic and consist of the Riemann--Hurwitz formula, the upper bound of $4g+2$ due to Wiman, an integer-sum condition on the valencies of the ramification points due to Nielsen, and several conditions on the least common multiple of the ramification indices that are due to Harvey. Ashikaga and Ishizaka also gave lists of the conjugacy classes of periodic elements in $\Mod(S_1)$, $\Mod(S_2)$, and $\Mod(S_3)$. Hirose \cite{hirose2010} gave a list of the conjugacy classes of periodic elements in $\Mod(S_4)$. Broughton \cite{Broughton} listed criteria for determining actions of finite groups on $S_g$, and hence for determining conjugacy classes of finite subgroups of $\Mod(S_g)$. Broughton also gave a complete classification of actions of finite groups on $S_2$ and $S_3$. Kirmura \cite{Kimura} gave a complete classification for $S_4$.

Several results have been proved about guaranteeing the existence of elements of order $k$ in $\Mod(S_g)$ for sufficiently large $g$. Harvey \cite{Harveyprime} showed that $\Mod(S_g)$ contains an element of order $k$ whenever $g \geq (k^2-1)/2$. Glover and Mislin \cite{GM} showed that $\Mod(S_g)$ contains an element of order $k$ whenever $g > (2k)^2$. Tucker \cite{Tucker} gave necessary and sufficient conditions for the existence of an element of order $k$ in $\Mod(S_g)$ that is realizable by a rotation of $S_g$ embedded in $\mathbb{R}^3$. Using this characterization, Tucker showed that for any $k$ and for sufficiently large $g$, $\Mod(S_g)$ contains an element of order $k$ that is realizable by a rotation of $S_g$ embedded in $\mathbb{R}^3$. We give a proof of this fact in Lemma \ref{thm:mainlemma}.

\subsection*{Background and prior results for other groups.}

The symmetric group $\Sigma_n$ is the group of permutations on $n$ symbols. The alternating group $\A_n$ is the unique index 2 subgroup of $\Sigma_n$ and consists of the even permutations. There are of course many results about generating sets for $\Sigma_n$ and $\A_n$. We provide here a few examples about generating sets consisting of a universally bounded number of elements of small or fixed order. Miller \cite{Miller1} showed that except for a few cases where $n\leq8$, every $\Sigma_n$ and $\A_n$ is generated by two elements, one of order 2 and one of order 3. Later, Miller \cite{Miller2} showed that whenever $\A_n$ contains an element of order $k>3$, the group may be generated by two elements, one of order 2 and one of order $k$. He also showed that the same holds for $\Sigma_n$, except for the case where $k=4$ and $n=6$ and in the cases where $k>3$ is an odd prime and $n = 2k-1$. Nuzhin \cite{Nuzhin} showed that $\A_n$ is generated by three involutions (where two commute) if and only if $n \geq 9$ or $n=5$. This implies that there is a universal upper bound on the number of involutions needed to generate $\A_n$ whenever they generate $\A_n$ at all. Annin and Maglione \cite{Econ} determined that $\max\{2,\left\lceil(n-1)/(k-1)\right\rceil\}$ is the smallest number of $k$-cycles needed to generate $\Sigma_n$ when $k$ is even and to generate $\A_n$ when $k$ is odd.

Turning to automorphism groups of free groups, let $F_n$ be the free group on $n$ generators $\{a_1,\dots,a_n\}$. Then $\Aut(F_n)$ is the automorphism group of $F_n$, $\Inn(F_n)$ is the inner automorphism group of $F_n$, and $\Out(F_n)=\Aut(F_n) / \Inn(F_n)$ is the outer automorphism group of $F_n$. The abelianization of each of $\Aut(F_n)$ and $\Out(F_n)$ is $\GL(n,\mathbb{Z})$. The preimages of the subgroup $\SL(n,\mathbb{Z})$ in $\Aut(F_n)$ and $\Out(F_n)$ under the abelianization map are respectively $\Aut^+(F_n)$ and $\Out^+(F_n)$. These are each additionally characterized as being the unique index 2 subgroups of $\Aut(F_n)$ and $\Out(F_n)$. Note that since $\Aut(F_n)$ surjects onto $\Out(F_n)$, a generating set for $\Aut(F_n)$ also yields a generating set for $\Out(F_n)$. The same relationship holds between $\Aut^+(F_n)$ and $\Out^+(F_n)$.

Nielsen \cite{Nielsen} gave the first finite presentations for $\Aut(F_n)$ and $\Out(F_n)$. His generators consisted of permutations of the generators of $F_n$ as well as an inversion (e.g. $a_1 \rightarrow a_1^{-1}$) and a transvection (e.g. $a_1 \rightarrow a_1a_2$) of generators of $F_n$. Notice that an inversion has order 2 and a transvection has infinite order. Magnus \cite{Magnus} showed that $\Out(F_n)$ is generated by an inversion of a generator of $F_n$ along with the collection of left and right transvections of generators of $F_n$. Gersten \cite{Gersten} gave a presentation for $\Aut^+({F_n})$ and in particular showed that the collection of left and right transvections generate $\Aut^+({F_n})$.

There are several results about generating $\Aut(F_n)$ with elements of finite order. Neumann \cite{Neumann} gave a presentation for $\Aut(F_n)$ with generators of order at most $n$. Armstrong, Forrest, and Vogtmann \cite{Vogtmann} gave a presentation for $\Aut(F_n)$ where the generating set is a finite set of involutions and where the number of involutions required grows with $n$. In a result similar to that of Brendle and Farb for $\Mod(S_g)$, Zucca \cite{Zucca} showed that $\Aut(F_n)$ is generated by three involutions for $n \geq 5$. Tamburini and Wilson \cite{TW} showed that $\Aut(F_n)$ is generated by two elements, one of order 2 and one of order 3, for $n \geq 18$.

The group $\SL(n, \mathbb{Z})$ is the group of $n \times n$ matrices with integer entries and determinant 1. Gustafson \cite{Gustafson} showed that $\SL(n, \mathbb{Z})$ is generated by involutions. The group $\Sp(2n,\mathbb{Z})$ is the group of $2n \times 2n$ matrices with integer entries that respect the standard symplectic form on $\mathbb{R}^{2n}$. Ishibashi \cite{Ishibashi} showed that $\Sp(2,\mathbb{Z})$ can be generated by two elements, one of order 4 and the other of order 3, and that $\Sp(2n,\mathbb{Z})$, $n>1$, can be generated by two elements, one of order 2 and the other of order $12(n-1)$ if $n$ is even and of order $6(n-1)$ if $n$ is odd.

\subsection*{Acknowledgments}
The author would like to thank Dan Margalit for his guidance, support, feedback, and encouragement throughout this project. The author would also like to thank Shane Scott for feedback and helpful conversations. Thanks also go to Martin Kassabov, both for his comments and for encouraging the author to sharpen the mapping class group result to three elements. The author also thanks Allen Broughton, John Dixon, John Etnyre, Benson Farb, Bill Harvey, Naoyuki Monden, Bal\'azs Strenner, Tom Tucker, and Josephine Yu for comments on a draft of this paper. The author was
partially supported by the NSF grant DGE-1650044.

\section{Constructing elements of a given order in $\Mod(S_g)$}
In this section we construct elements of order $k$ in $\Mod(S_g)$ whenever $g$ is sufficiently large. We will use elements that are conjugate to these elements when we build the generating sets of Theorem \ref{thm:maintheorem}.

\begin{lemma}
\label{thm:mainlemma}
Let $k \geq 2$. Then $\Mod(S_g)$ contains an element of order $k$ whenever $g>0$ can be written as $ak+b(k-1)$ with $a,b \in \mathbb{Z}_{\geq 0}$ or as $ak+1$ with $a \in \mathbb{Z}_{>0}$.
\end{lemma}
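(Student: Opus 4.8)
The plan is to realize the desired element of order $k$ as a rotation of an explicitly constructed surface $S_g$ embedded in $\mathbb{R}^3$, following Tucker's approach as referenced in the excerpt. The basic building blocks will be two types of $k$-fold symmetric pieces: a ``sphere with $k$ holes'' configuration and a ``handle'' configuration. Concretely, I would start with a round sphere in $\mathbb{R}^3$ with the $\mathbb{Z}/k$ action given by rotation about the vertical axis; this is a genus-$0$ surface with a free rotation except at the two poles. To increase the genus in a way compatible with the rotation, I would attach handles in $\mathbb{Z}/k$-orbits: attaching one handle and its $k-1$ rotated images (a total of $k$ handles arranged symmetrically around the axis) raises the genus by $k-1$ if the handles are attached ``in a cycle'' around the equator, and raises it by $k$ if they are attached as $k$ independent handles (say, each handle connecting the northern and southern hemispheres without linking neighbors). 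Performing $a$ attachments of the first kind and $b$ of the second kind produces a surface of genus exactly $a(k) + b(k-1)$ — or with the roles arranged appropriately, $ak + b(k-1)$ — carrying a rotation of order exactly $k$, which gives the first family of allowed genera.

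For the second family, genus $ak+1$, I would instead start from a surface already carrying a free $\mathbb{Z}/k$ action. The standard model is a genus-$(k+1)$... more simply, one can take a torus-like construction: begin with a surface of genus $1$ realized so that, after attaching handles, a $\mathbb{Z}/k$ rotation survives. The cleanest route is to note that a surface of genus $ak+1$ admits a free $\mathbb{Z}/k$ action (this is a classical fact: the quotient is a surface of genus $a+1$ minus nothing, and $\chi(S_{ak+1}) = k\,\chi(S_{a+1})$ forces the count), and a free action in particular has order exactly $k$. Alternatively, and more in keeping with the explicit-rotation philosophy, I would exhibit $S_{ak+1}$ concretely: take $k$ copies of a once-punctured genus-$a$ piece... but the $+1$ is most transparently obtained by taking the genus-$1$ surface with its rotation and then equivariantly connect-summing in $a$ further symmetric families of $k$ handles each contributing $k$ to the genus, starting from a base case of genus $1$ that carries an order-$k$ rotation (e.g. a rotation of the torus of order $k$ exists for any $k$ when we allow the torus to be $\mathbb{R}^2/\Lambda$ with a suitable $k$-fold symmetric lattice — though in fact only $k \in \{2,3,4,6\}$ work for the flat torus, so one should instead use the once-punctured-torus-with-boundary trick or simply invoke the free-action fact above). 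The honest statement is: for the $ak+1$ case I will verify directly that $S_{ak+1} \to S_{a+1}$ can be taken to be a regular $k$-fold cover, e.g. via a surjection $\pi_1(S_{a+1}) \to \mathbb{Z}/k$ (easy to write down since $a+1 \geq 2$ gives plenty of generators), and the deck group gives the element of order $k$.

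In all cases, after building the action one must check that the order is exactly $k$ rather than a proper divisor: for the non-free constructions this follows because the rotation acts nontrivially on, say, the tangent space at a pole (rotation by $2\pi/k$), and for the free constructions it follows because the constructed homomorphism $\pi_1 \to \mathbb{Z}/k$ is surjective. Finally I would assemble these observations into a case analysis matching the two forms $ak+b(k-1)$ and $ak+1$ in the statement, handling the small boundary cases (e.g. $a=b=0$ is excluded since $g>0$; $b=0$ alone gives $g=ak$; $a=0$ gives $g=b(k-1)$) by pointing out the construction degenerates gracefully.

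The main obstacle I anticipate is the bookkeeping for the genus of the symmetric handle attachments — making precise the claim that a cyclically-attached orbit of $k$ handles contributes $k-1$ to the genus while an ``independently''-attached orbit contributes $k$, and that these can be combined freely without the rotation order dropping or the surface becoming disconnected. This is a Euler-characteristic computation together with a connectivity check, and the subtlety is purely in choosing the right explicit embedded models so that the vertical-axis rotation genuinely permutes the handles as claimed; once the right picture is fixed the verification is routine.
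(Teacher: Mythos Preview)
Your overall strategy matches the paper's: build explicit surfaces in $\mathbb{R}^3$ with $k$-fold rotational symmetry. There is, however, a genuine gap in your description of the genus-$(k-1)$ building block. Attaching $k$ handles to a \emph{single} sphere always yields a surface of genus $k$, since each handle lowers the Euler characteristic by $2$ regardless of how the handles are arranged or linked; your ``handles attached in a cycle around the equator'' therefore cannot produce genus $k-1$. The paper's model for the genus-$(k-1)$ piece is different and does not arise from a single sphere: one takes \emph{two} spheres placed along the axis, removes $k$ equally spaced disks from the equator of each, and connects the two spheres by $k$ cylinders that are cyclically permuted by the rotation. An Euler-characteristic count gives $\chi = 4 - 2k$, hence genus $k-1$. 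With the two correct building blocks (genus $k$ from one sphere with $k$ independent handles, genus $k-1$ from two spheres joined by $k$ tubes) the paper then takes equivariant connected sums along the rotation axis to realize any $g = ak + b(k-1)$, exactly as you intended.

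For the $ak+1$ case your covering-space argument---a surjection $\pi_1(S_{a+1}) \to \mathbb{Z}/k$ (which exists since $a+1 \geq 2$) gives a free $\mathbb{Z}/k$-action on the regular cover $S_{ak+1}$---is correct and is a genuinely different route from the paper's. The paper stays with explicit rotations throughout: it takes the genus-$ak$ surface already built, removes small invariant disks around the two points where the axis meets the surface, and joins the resulting boundary circles by a cylinder along the axis, adding exactly one to the genus while preserving the order-$k$ rotation. Your approach is cleaner for the bare existence statement in the lemma; the paper's geometric construction, on the other hand, yields an element that is visibly a rotation of an embedded surface in $\mathbb{R}^3$, which is exactly what is needed later when explicit curves are tracked in the proof of Theorem~\ref{thm:maintheorem}.
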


This result was proved by Tucker \cite{Tucker}, who additionally showed that these number theoretic conditions are necessary for an element of $\Mod(S_g)$ to be realizable by a rotation of $S_g$ embedded in $\mathbb{R}^3$.
\begin{proof}
In Figure \ref{fig:54final} we depict two ways of embedding a surface in $\mathbb{R}^3$ so that it has $k$-fold rotational symmetry. First, we can embed a surface of genus $k$ in $\mathbb{R}^3$ so that it has a rotational symmetry of order $k$ by evenly spacing $k$ handles about a central sphere. We can also embed a surface of genus $k-1$ into $\mathbb{R}^3$ so that it has a rotational symmetry of order $k$, as follows. Arrange two spheres along an axis of rotation and remove $k$ disks from each sphere, evenly spaced along the equator of each. Then connect pairs of boundary components, one from each sphere, with a cylinder. This can be done symmetrically so that a rotation by $2\pi/k$ permutes the cylinders cyclically.

\begin{figure}[ht]
\centering
\includegraphics[width=280pt]{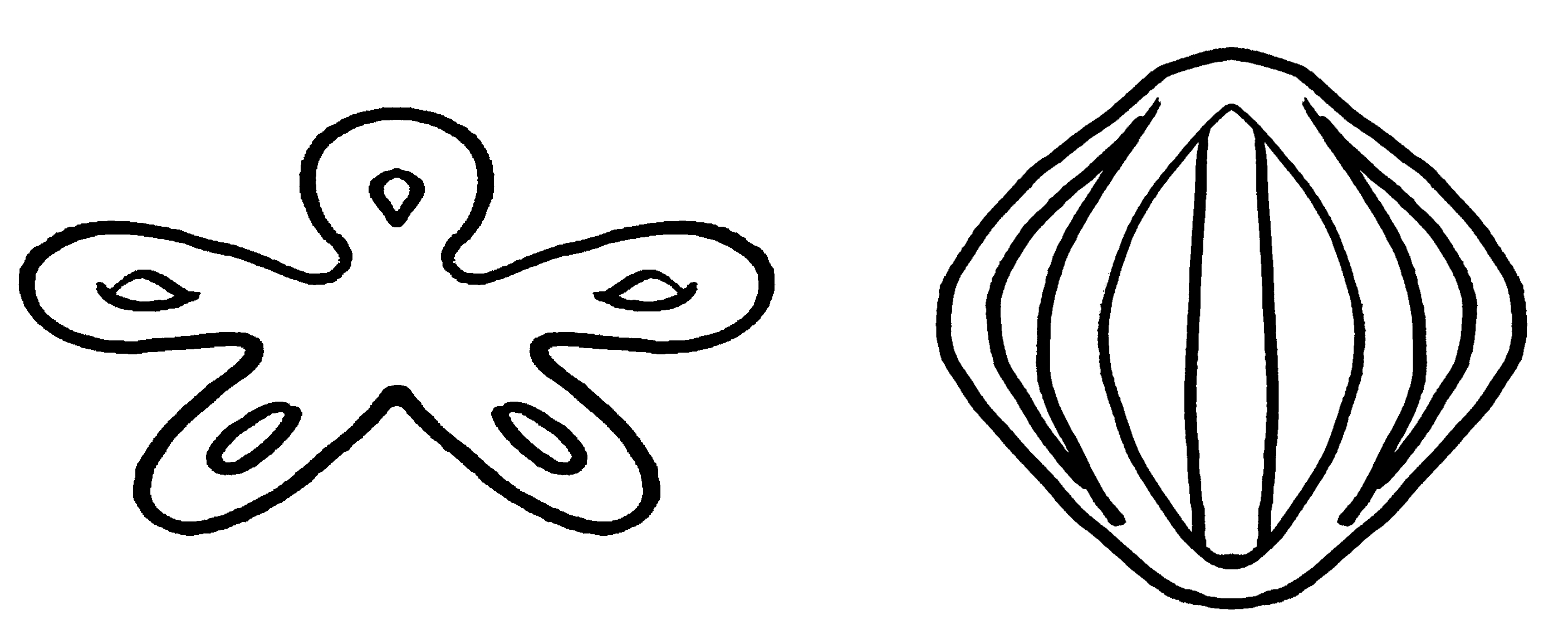}
\caption{Embeddings of $S_5$ and $S_4$ with rotational symmetry of order 5.}
\label{fig:54final}
\end{figure}

\begin{figure}[ht]
\centering
\includegraphics[width=280pt]{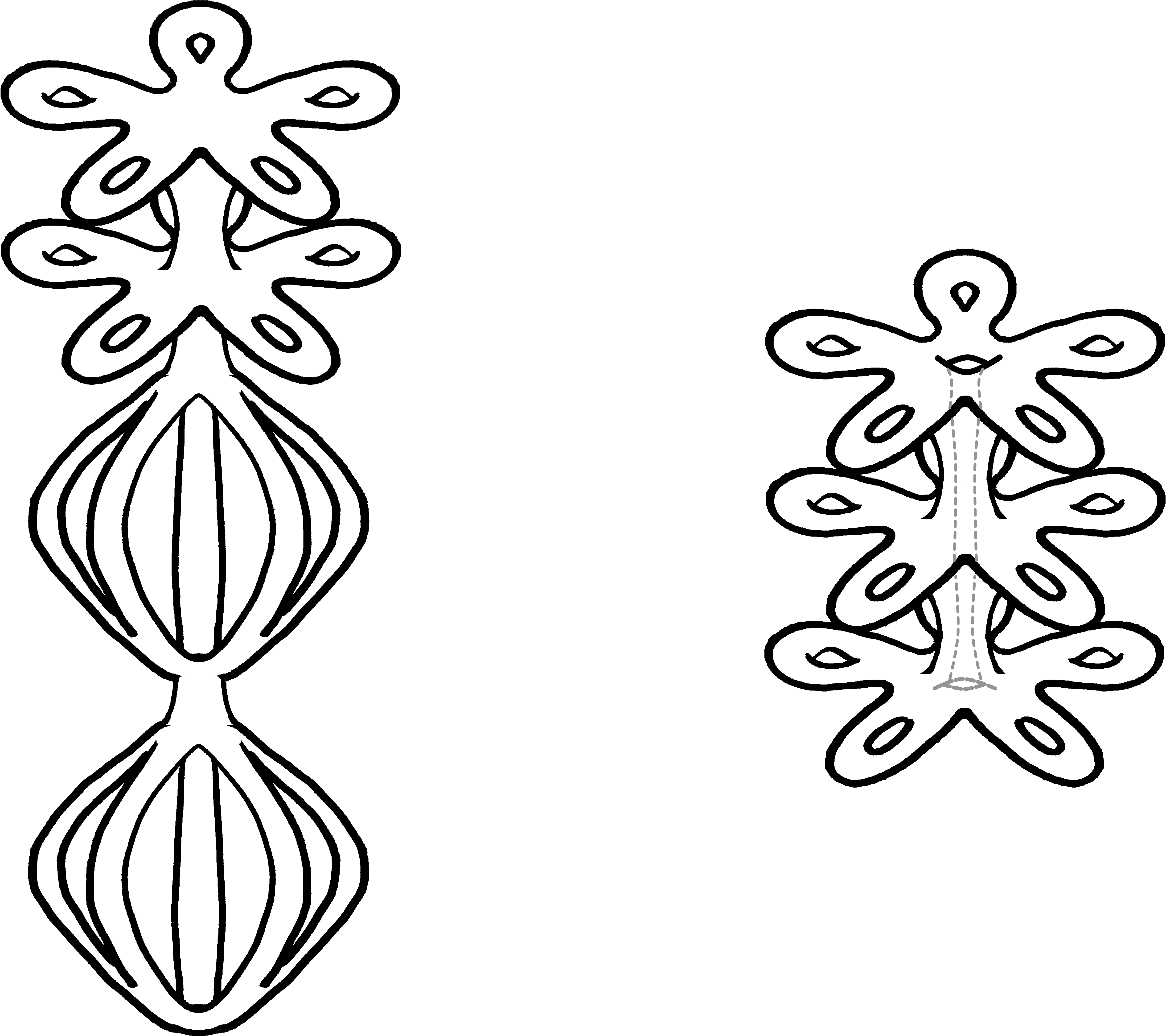}
\caption{Embeddings of $S_{18}$ and $S_{16}$ with rotational symmetry of order 5.}
\label{fig:stackandplus1}
\end{figure}

We can use these two types of embeddings to construct embeddings of surfaces of higher genus that also have rotational symmetry of order $k$. Whenever $g=ak+b(k-1)$, we can construct an embedding of $S_g$ in $\mathbb{R}^3$ by taking a connected sum of surfaces of genus $k$ and $k-1$ along their axis of rotational symmetry. See the left of Figure \ref{fig:stackandplus1} for an example. Rotating a surface embedded in this way by $2\pi/k$ produces an element of $\Mod(S_g)$ of order $k$ for any genus $g=ak+b(k-1)$. That an element so formed does not have order less than $k$ can be seen by the element's action on homology.

In order to produce elements of order $k$ in the case where $g=ak+1$, we first construct a surface of genus $ak$ with $k$-fold rotational symmetry by the above construction. We can modify this surface to increase its genus by 1 while preserving its symmetry as follows. See the right of Figure \ref{fig:stackandplus1}. The axis of a genus $ak$ surface intersects the surface at two points---at the top and the bottom. Removing an invariant disk around each of these points creates two boundary components. Connecting the two boundary components with a cylinder yields an embedding of a surface of genus $ak+1$ with $k$-fold symmetry.
\end{proof}

By way of some elementary number theory, we show that all sufficiently large integers have either the form $ak+b(k-1)$ or the form $ak+1$.

\begin{lemma}
If $k \geq 5$ and $g \geq (k-1)(k-3)$, then $g$ can either be written in the form $ak+b(k-1)$ with $a,b \in \mathbb{Z}_{\geq 0}$ or in the form $ak+1$ with $a \in \mathbb{Z}_{>0}$.
\label{lemma:frobenius}
\end{lemma}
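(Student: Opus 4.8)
The plan is to read the statement as one about the numerical semigroup $S = \{\,ak + b(k-1) : a,b \in \mathbb{Z}_{\geq 0}\,\}$ generated by the coprime integers $k$ and $k-1$, supplemented by the arithmetic progression $\{\,ak+1 : a \in \mathbb{Z}_{>0}\,\}$. Since $\gcd(k,k-1)=1$, the semigroup $S$ contains every sufficiently large integer, and I would in fact prove the sharp statement that the \emph{only} integer $g \geq (k-1)(k-3)$ not lying in $S$ is $g = k^2-3k+1$; this exceptional value will then be covered by the second form.

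First I would record the elementary membership test for $S$. Reducing modulo $k-1$ and using $k \equiv 1 \pmod{k-1}$, one sees that $ak+b(k-1) \equiv a \pmod{k-1}$; hence for $n \geq 0$, writing $y = n \bmod (k-1) \in \{0,\dots,k-2\}$, the integer $n$ lies in $S$ if and only if $n \geq ky$. (The residue forces $a \equiv y \pmod{k-1}$, so the smallest admissible choice is $a=y$, and once $n-ky \geq 0$ the complementary coefficient $b=(n-ky)/(k-1)$ is automatically a nonnegative integer.) Consequently the gaps of $S$ are exactly the integers of the form $y + m(k-1)$ with $1 \leq y \leq k-2$ and $0 \leq m \leq y-1$.

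Next I would run the inequality. If a gap $g = y+m(k-1)$ satisfies $g \geq (k-1)(k-3)$, then, using $y \leq k-2$, we get $m(k-1) = g-y \geq (k-1)(k-3)-(k-2) = (k-1)(k-4)+1 > (k-1)(k-4)$, forcing $m \geq k-3$; but $m \leq y-1 \leq k-3$, so $m=k-3$ and $y=k-2$, and therefore $g = (k-2)+(k-3)(k-1) = k^2-3k+1$. To finish, I would note that $k^2-3k+1 = (k-3)k+1$, which is of the form $ak+1$ with $a=k-3 \geq 2 > 0$ since $k \geq 5$. Thus any $g \geq (k-1)(k-3)$ either lies in $S$, giving the form $ak+b(k-1)$ with $a,b \geq 0$, or equals $(k-3)k+1$, giving the form $ak+1$ with $a>0$.

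The argument is short, and the only point requiring care — and the reason the threshold is precisely $(k-1)(k-3)$ rather than something smaller — is the gap-counting step, where one must exploit that a gap has the constrained shape $y+m(k-1)$ with $m < y$. (As a sanity check one can instead argue by cases on $r = g \bmod k$: the residues $r\in\{0\}\cup\{3,\dots,k-1\}$ and $r=1$ are immediate, and for the single delicate residue $r=2$ one uses that the largest integer $\equiv 2 \pmod k$ strictly below $(k-2)(k-1)$ equals $(k-2)(k-1)-k = (k-1)(k-3)-1$, which sits just under the threshold.)
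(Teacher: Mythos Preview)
Your proof is correct. Both your argument and the paper's identify the same single exceptional value $g=k^2-3k+1=(k-3)k+1$ in the range $g\geq(k-1)(k-3)$ that fails to lie in the numerical semigroup $S=\langle k,k-1\rangle$, and both observe it has the form $ak+1$ with $a=k-3>0$.

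The route is genuinely different, however. The paper proceeds in three concrete steps: first invoking the Frobenius bound so that every $g\geq(k-1)(k-2)$ lies in $S$; then explicitly writing each integer in the short interval $[(k-1)(k-3),\,k(k-3)]$ as a sum of $(k-1)$'s and $k$'s by starting from $(k-3)(k-1)$ and swapping one summand at a time; and finally noting that the single remaining value $k(k-3)+1=(k-1)(k-2)-1$ is of the form $ak+1$. Your argument instead gives a clean structural description of \emph{all} gaps of $S$ (as the integers $y+m(k-1)$ with $1\leq y\leq k-2$ and $0\leq m\leq y-1$) and then squeezes the constraints $m\leq y-1$ and $y\leq k-2$ against the threshold to force $m=k-3$, $y=k-2$. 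Your approach is a bit more systematic and immediately yields the sharpness of the bound $(k-1)(k-3)$; the paper's is slightly more hands-on and avoids setting up the residue classification. Either way the content is the same short computation.
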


\begin{proof}[Proof]
All integers at least $(k-1)(k-2)$ can be written in the form $ak+b(k-1)$ with $a,b \in \mathbb{Z}_{\geq 0}$ by the solution to the Frobenius coin problem. Further, every number from $(k-1)(k-3)$ to $k(k-3)$ can also be written as a sum of $k$'s and $k-1$'s. Start with $k-3$ copies of $k-1$ and replace the $k-1$'s one at a time by $k$'s. Finally, $k(k-3)+1=(k-1)(k-2)-1$ is of the form $ak+1$.
\end{proof}

In addition to producing elements of order $k$ in the stable range $g \geq (k-1)(k-3)$, we note that the construction given in Lemma \ref{thm:mainlemma} is also valid for approximately half of the values of $g$ less than $(k-3)(k-1)$. Specifically, $(k^2-3k-4)/2$ of these $k^2-4k+2$ smaller values of $g$ are either of the form $ak+b(k-1)$ or $ak+1$. This amount is simply $\sum_{i=3}^{k-2}i$, since $\{k-1,k,k+1\}$ is the first run of numbers of the given forms and $\{(k-4)(k-1),...,(k-4)k+1)\}$ is the last run less than $(k-1)(k-3)$.

Also, note that Lemma \ref{thm:mainlemma} includes the cases where $k$ is 2, 3, or 4. However, the construction we use to create the generating sets of Theorem \ref{thm:maintheorem} does not work for these small orders. However, these values of $k$ are those already treated by Luo, Brendle and Farb, Kassabov, and Monden in their work on generating sets for $\Mod(S_g)$.

\section{Building a Dehn twist}

In this section, we show that a Dehn twist in $\Mod(S_g)$ about a nonseparating curve may be written as a product in four elements whenever these act on small collections of curves in a specified way. In fact, even fewer than four elements will suffice as long as products in these elements act on the collections of curves as specified. In our proof, we follow the argument that Luo \cite{Luo} gave for writing a Dehn twist as a product of involutions, as well as the pair swap argument made by Brendle and Farb \cite{Brendle2004187}.

We write $T_{c}$ for the (left) Dehn twist about the curve $c$. Recall the \textit{lantern relation} that holds among Dehn twists about seven curves arranged in a sphere with four boundary components. In the left of Figure \ref{fig:lantern} we depict a sphere with four boundary components $L$ that is a subsurface of $S_g$. Singling out this particular lantern is convenient for our proof of Theorem \ref{thm:maintheorem}. Note that $S_g \setminus L$ is connected. Seven curves lie in $L$ in a lantern arrangement, and several of these are Humphries curves. We will call these seven curves \textit{lantern curves}. We have the following lantern relation:
\begin{align*}
T_{\alpha_1}T_{\alpha_2}T_{x_1}T_{\gamma_2} &= T_{\gamma_1}T_{x_3}T_{x_2}.
\end{align*}

Finally, recall that for a Dehn twist $T_c$ and a mapping class $f$, we have $fT_cf^{-1}=T_{f(c)}$.

\begin{lemma}Suppose we are given the subsurface $L$ in $S_g$ and elements $f$, $g$, and $h$ in $\Mod(S_g)$ such that
\begin{align*}
{f}(\gamma_1) &= \gamma_2\\
{g}(x_3,x_1) &= (\gamma_1,\gamma_2)\\
{h}(x_2,\alpha_2) &= (\gamma_1,\gamma_2).
\end{align*}
Then the Dehn twist $T_{\alpha_1}$ may be written as a product in $f$, $g$, $h$, an element conjugate to $f$, and their inverses.
\label{lemma:four}
\end{lemma}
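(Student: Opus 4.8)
The plan is to solve the lantern relation for $T_{\alpha_1}$, regroup the resulting word into three ``bounding-pair'' blocks using that the twists about the four boundary curves of $L$ are central among the seven lantern twists, and then recognize each block as a conjugate of the single element $f(f')^{-1}$, where $f'$ is a Dehn-twist conjugate of $f$.

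First I would identify which lantern curves bound $L$. The four twists on the left of the lantern relation commute with all seven lantern twists, whereas $\gamma_1$, $x_2$, $x_3$ pairwise intersect; hence $\alpha_1,\alpha_2,x_1,\gamma_2$ are the boundary curves of $L$ and $\gamma_1,x_2,x_3$ are its interior curves. Consequently each of $T_{\alpha_1},T_{\alpha_2},T_{x_1},T_{\gamma_2}$ commutes with all seven twists, $\gamma_1$ and $\gamma_2$ are disjoint, and each boundary twist commutes with each interior twist. Solving the lantern relation gives $T_{\alpha_1}=T_{\gamma_1}T_{x_3}T_{x_2}T_{\gamma_2}^{-1}T_{x_1}^{-1}T_{\alpha_2}^{-1}$, and pushing the boundary twists $T_{\gamma_2}^{-1}$ and $T_{x_1}^{-1}$ leftward past the interior twists that precede them rewrites this as
\[
T_{\alpha_1}=\bigl(T_{\gamma_1}T_{\gamma_2}^{-1}\bigr)\bigl(T_{x_3}T_{x_1}^{-1}\bigr)\bigl(T_{x_2}T_{\alpha_2}^{-1}\bigr).
\]

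Next set $P=T_{\gamma_1}T_{\gamma_2}^{-1}$. From $fT_cf^{-1}=T_{f(c)}$ and the hypotheses on $g$ and $h$ we get $T_{x_3}T_{x_1}^{-1}=g^{-1}Pg$ and $T_{x_2}T_{\alpha_2}^{-1}=h^{-1}Ph$, so
\[
T_{\alpha_1}=P\,(g^{-1}Pg)\,(h^{-1}Ph).
\]
It remains to write $P$ using $f$ and a conjugate of $f$. This is the crux, and it is short: since $\gamma_1$ and $\gamma_2$ are disjoint their twists commute, and $f(\gamma_1)=\gamma_2$ gives $fT_{\gamma_1}^{-1}f^{-1}=T_{\gamma_2}^{-1}$; hence, setting $f'=T_{\gamma_1}^{-1}fT_{\gamma_1}$, which is conjugate to $f$, we have
\[
f(f')^{-1}=fT_{\gamma_1}^{-1}f^{-1}T_{\gamma_1}=T_{\gamma_2}^{-1}T_{\gamma_1}=P.
\]
Substituting $P=f(f')^{-1}$ into the previous display exhibits $T_{\alpha_1}$ as an explicit product in $f$, $g$, $h$, $f'$, and their inverses.

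I expect no genuine obstacle here. The only place demanding care is the regrouping in the second paragraph: one must correctly identify the boundary curves of $L$ and keep track of the disjointness relations among the seven lantern curves so that the three blocks $T_{\gamma_1}T_{\gamma_2}^{-1}$, $T_{x_3}T_{x_1}^{-1}$, $T_{x_2}T_{\alpha_2}^{-1}$ are assembled through legal commutations. The remark that ``even fewer than four elements will suffice'' is then immediate: if in a given situation one of $g$, $h$, or $f'$ happens to be expressible as a product in the remaining elements that still realizes its prescribed curve map, it can be dropped from the list.
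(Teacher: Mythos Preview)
Your argument is correct and follows essentially the same route as the paper: rearrange the lantern relation into three ``bounding-pair'' blocks, recognize two of them as $g$- and $h$-conjugates of $P=T_{\gamma_1}T_{\gamma_2}^{-1}$, and then express $P$ as a short word in $f$ and a Dehn-twist conjugate of $f$. The only cosmetic difference is the choice of conjugate---the paper uses $T_{\gamma_2}fT_{\gamma_2}^{-1}$ (writing $P=f^{-1}\cdot T_{\gamma_2}fT_{\gamma_2}^{-1}$), whereas you use $f'=T_{\gamma_1}^{-1}fT_{\gamma_1}$; both work because $\gamma_1$ and $\gamma_2$ are disjoint.
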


While this lemma is stated for a specific Dehn twist by way of a specific lantern, the result holds for other Dehn twists by the \textit{change of coordinates principle}: if two collections of curves on a surface $S_g$ are given by the same topological data, then there exists a homeomorphism of $S_g$ to itself that maps the first collection of curves to the second. Details are given by Farb and Margalit \cite{Primer}.

\begin{proof}Since Dehn twists about nonintersecting curves commute, one form of the lantern relation for $L$ is
\begin{align*}
T_{\alpha_1} &= (T_{\gamma_1}T^{-1}_{\gamma_2})(T_{x_3}T^{-1}_{x_1})(T_{x_2}T^{-1}_{\alpha_2}).
\end{align*}

Applying the assumptions on $g$ and $h$ yields
\begin{align*}
T_{\alpha_1} &= (T_{\gamma_1}T^{-1}_{\gamma_2})(g^{-1}(T_{\gamma_1}T^{-1}_{\gamma_2})g)(h^{-1}(T_{\gamma_1}T^{-1}_{\gamma_2})h).
\end{align*}

Applying the assumption on $f$ and regrouping yields
\begin{align*}
T_{\alpha_1} &= ((f^{-1}T_{\gamma_2}f)T^{-1}_{\gamma_2})(g^{-1}(f^{-1}T_{\gamma_2}f)T^{-1}_{\gamma_2}g)(h^{-1}(f^{-1}T_{\gamma_2}f)T^{-1}_{\gamma_2}h)\\
&= (f^{-1}(T_{\gamma_2}fT^{-1}_{\gamma_2}))(g^{-1}f^{-1}(T_{\gamma_2}fT^{-1}_{\gamma_2})g)(h^{-1}f^{-1}(T_{\gamma_2}fT^{-1}_{\gamma_2})h).
\end{align*}

We have written $T_{\alpha_1}$ as a product in $f$, $g$, $h$, $T_{\gamma_2}fT^{-1}_{\gamma_2}$, and their inverses.
\end{proof}

\begin{figure}[ht]
\centering
\includegraphics[width=\textwidth]{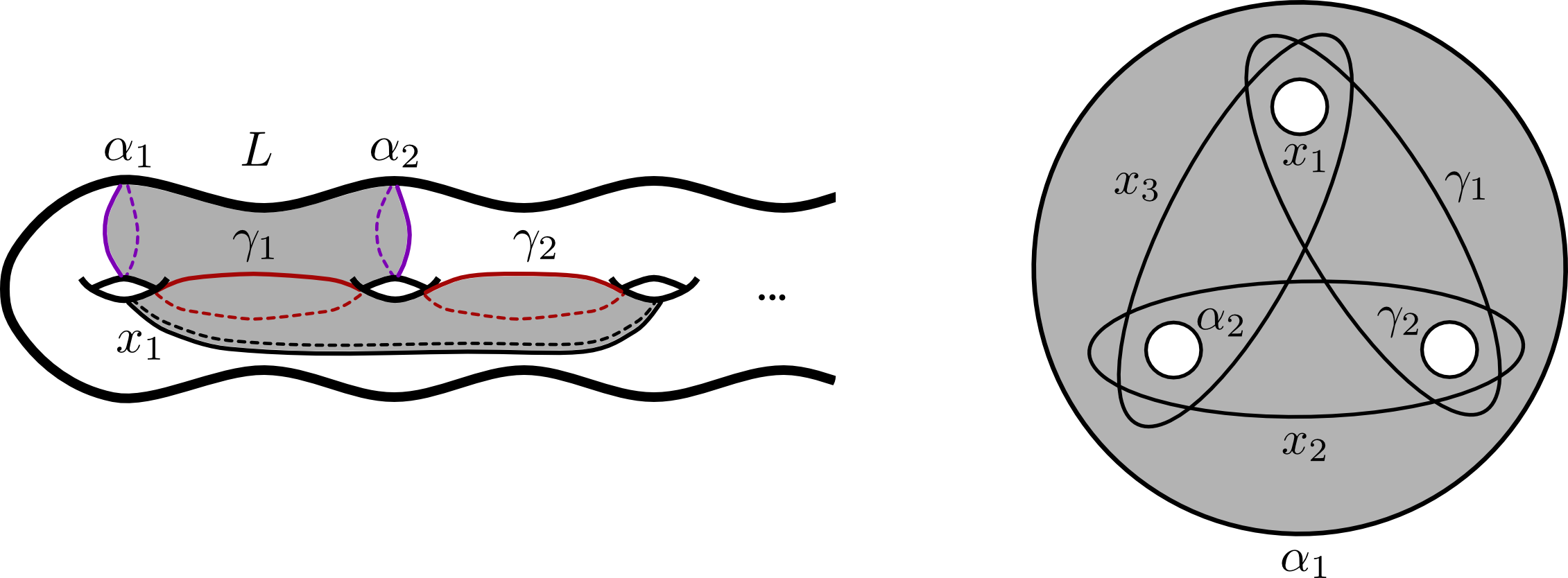}
\caption{The subsurface $L$ with five lantern curves drawn in. Two additional lantern curves $x_2$ and $x_3$ are omitted for clarity but are determined uniquely by the first five.}
\label{fig:lantern}
\end{figure}

Note that if $f$ has order $k$, then so does $T_{\gamma_2}fT^{-1}_{\gamma_2}$ since it is a conjugate of $f$. Finally, notice that we required very little of $f$, $g$, and $h$ in this argument---only that they map one specific curve or one specific pair of curves to another. We will take advantage of this flexibility in the proof of Theorem \ref{thm:maintheorem}.

\section{Generating $\Mod(S_g)$ with four elements of order $k$}

In this section and in the following section we prove the two parts of the following theorem, which is our main technical result.
\begin{theorem}
\label{thm:maintheorem}(1) Let $k \geq 5$ and let $g>0$ be of the form $ak+b(k-1)$ with $a,b \in \mathbb{Z}_{\geq 0}$ or of the form $ak+1$ with $a \in \mathbb{Z}_{>0}$. Then $\Mod(S_g)$ is generated by four elements of order $k$.\\

(2) Let $k \geq 8$ or $k=6$ and let $g>0$ be of the form $ak+b(k-1)$ with $a,b \in \mathbb{Z}_{\geq 0}$. Then $\Mod(S_g)$ is generated by three elements of order $k$. If instead $k=7$ and $g$ is of the form $7+7a+6b$ with $a,b \in \mathbb{Z}_{\geq 0}$, then three elements of order 7 also suffice.
\end{theorem}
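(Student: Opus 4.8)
The plan is to build, for each admissible genus $g$, an explicit collection of periodic mapping classes of order $k$ and then invoke the strategy outlined in the paper: show that some Humphries Dehn twist lies in the generated subgroup $H$, and that $H$ acts transitively on the set of $2g+1$ Humphries curves. By Humphries' theorem this forces $H = \Mod(S_g)$. The elements of order $k$ will be chosen to be conjugates of the rotations $\rho$ constructed in Lemma \ref{thm:mainlemma} — recall these come from embeddings of $S_g$ in $\mathbb{R}^3$ built as connected sums of the genus-$k$ and genus-$(k-1)$ symmetric blocks, rotating by $2\pi/k$. The key point from Lemma \ref{lemma:four} is that to write down a single nonseparating Dehn twist it suffices to produce elements $f,g,h$ (and the conjugate $T_{\gamma_2}fT_{\gamma_2}^{-1}$) of order $k$ that move a handful of specified curves to a handful of other specified curves inside the distinguished lantern $L$; since $T_{\gamma_2}fT_{\gamma_2}^{-1}$ is again of order $k$, this costs us nothing in the count. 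So the real work is purely combinatorial/topological: engineer finitely many order-$k$ elements whose induced permutation action on curves (i) realizes the three prescribed "matching" conditions of Lemma \ref{lemma:four} for the lantern $L$, and (ii) together generate a subgroup of $\Mod(S_g)$ that is transitive on Humphries curves.

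For part (2) specifically — three elements of order $k$ — I would first handle the case $g$ of the form $ak+b(k-1)$, i.e.\ omitting the "$+1$" embeddings. The reason the $+1$ case is excluded here (and only partially recovered for $k=7$) is presumably that the cylinder-joining-the-poles modification breaks the extra symmetry needed to fit everything into three elements, so we restrict to connected sums of the two "clean" blocks. My plan is: choose one element $r$ to be (conjugate to) the global rotation $\rho$ of $S_g$ coming from the symmetric embedding — this is the element that will be responsible for transitivity, since $\rho$ cyclically permutes the $k$ (or $k-1$) handles in each block and, after connect-summing, its powers together with a shift move Humphries curves around within each block and from block to block. Then choose the remaining one or two elements to be carefully placed conjugates $w \rho w^{-1}$ that, on the one hand, supply the lantern-matching data of Lemma \ref{lemma:four} for $L$, and on the other hand "glue" the orbits of the blocks together so that the full set of $2g+1$ Humphries curves falls into a single $H$-orbit. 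Concretely I would track, for each generator, exactly which Humphries curves it sends where (using $f T_c f^{-1} = T_{f(c)}$ and the change-of-coordinates principle to move the standard picture into position), and then run a connectivity argument on the "Humphries curve graph" whose edges record these moves.

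The $k=7$, $g = 7 + 7a + 6b$ sub-case I expect to require a small separate gadget: here one genus-$7$ block is singled out (the leading "$7+$"), and one arranges the order-$7$ rotation there plus order-$7$ conjugates on the remaining $7a+6b$ part, checking by hand that three suffice; I would present this as a minor variant of the main $k\geq 8$, $k=6$ construction, with a figure showing the placement of curves. The thresholds $k\geq 8$ or $k=6$ (excluding $k=7$ from the general statement, and excluding $k=5$) come from the sizes of the symmetric blocks: one needs the genus-$k$ block to be large enough to contain the lantern $L$ with all its Humphries curves and still have room for the auxiliary conjugating regions, and for $k=5,7$ there simply isn't enough room in the smallest blocks, which is why $k=5$ gets only the four-element statement in part (1) and $k=7$ gets only the restricted genera.

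The main obstacle I anticipate is the transitivity step: verifying that the two or three chosen order-$k$ elements generate a subgroup acting transitively on all $2g+1$ Humphries curves. Making one element a global rotation gives transitivity within each symmetric block almost for free, but one must exhibit explicit elements whose supports overlap enough blocks (and overlap in the right curves) to chain all the blocks' orbits together, while simultaneously not destroying the finite order $k$ and while still satisfying the rigid lantern constraints of Lemma \ref{lemma:four}. Balancing these competing demands — periodicity, the three lantern-matching equations, and global transitivity — with only three generators is the delicate part, and I expect it to come down to a case-by-case check (organized by the shape of the connected-sum decomposition of $g$) supported by carefully drawn figures, exactly as the paper signals by deferring this to Sections 4 and 5.
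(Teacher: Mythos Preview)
Your overall framework matches the paper's: every generator is a conjugate $\hat{\phi}^{-1}r\hat{\phi}$ of the rotation $r$ of the symmetric embedding $\Sigma_g$, chosen so that (a) the curve-moving hypotheses of Lemma~\ref{lemma:four} are met and (b) the Humphries curves fall into a single orbit. For part~(1) this is essentially what the paper does, with $\hat{f},\hat{g},\hat{h}$ designed so that $f,g,h$ satisfy the three conditions of Lemma~\ref{lemma:four} and simultaneously chain the blocks $\sigma_i$ together via auxiliary curve collections $G_i,H_i$.

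For part~(2), however, you have not identified the mechanism that drops the count from four to three. Lemma~\ref{lemma:four} requires \emph{three} curve-moving elements $f,g,h$, and together with the conjugate $T_{\gamma_2}fT_{\gamma_2}^{-1}$ this already uses four order-$k$ elements. Saying ``choose the remaining one or two carefully placed conjugates'' does not by itself reduce anything: you must explain how two independent generators can satisfy all three conditions. The paper's device is to engineer the single homeomorphism $\hat{g}$ so richly that a \emph{word} in $f$ and $g$ plays the role of $h$. Concretely, the lantern and three extra curves $\gamma_3,\gamma_4,\beta_6$ are embedded in $\sigma_1$ so that $g^3$ carries $(x_2,\alpha_2)$ to $(\gamma_3,\gamma_4)$ and $f^{-2}$ carries $(\gamma_3,\gamma_4)$ to $(\gamma_1,\gamma_2)$; then $h:=f^{-2}g^3$ satisfies the third hypothesis of Lemma~\ref{lemma:four} without being a new generator, and the three generators are $f$, $g$, and $T_{\gamma_2}fT_{\gamma_2}^{-1}$. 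This overloading of $\hat{g}$ is exactly what forces the threshold $k\geq 8$ (and, for $k=7$, the requirement that $\sigma_1$ have genus $7$ rather than $6$): one needs enough room in $\sigma_1$ to place six lantern curves plus $\gamma_3,\gamma_4,\beta_6$ so that the relevant powers of $r$ act as claimed. For $k=6$ the paper does \emph{not} fall under the general case as you suggest; instead it uses a genuinely different embedding of $L$ exploiting the three-fold symmetry of the lantern, so that $g^2$ and $g^4$ themselves serve as the $g$ and $h$ of Lemma~\ref{lemma:four}. Without these reductions your plan produces four generators, not three.
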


Theorem \ref{thm:1} in the introduction follows directly from Theorem \ref{thm:maintheorem}, along with Lemma \ref{lemma:frobenius} and the observation that any $g \geq (k-1)^2+1$ may be written as a sum of $k$'s and $(k-1)$'s with at least one summand equal to $k$.

In this section we prove the statement about generating with four elements. In order to illustrate our construction, we depict the particular case $k=5$ and $g=18$ in Figure \ref{fig:ghat}. In what follows, a \textit{chain} of curves on a surface is a sequence of curves $c_1, \dots, c_t$ such that pairs of consecutive curves in the sequence intersect exactly once and each other pair of curves is disjoint.

\begin{proof} We begin with the case where $g=ak+b(k-1)$ and treat the case where $ak+1$ with a small modification at the end of the proof. Since $g=ak+b(k-1)$, we have a $k$-fold symmetric embedding of $S_g$ in $\mathbb{R}^3$ as constructed in Lemma \ref{thm:mainlemma}. Call this embedded surface $\Sigma_g$ and let it be comprised of $a$ surfaces of genus $k$ followed by $b$ surfaces of genus $k-1$. Let $\sigma_1$ through $\sigma_{a+b}$ denote these $k$-symmetric subsurfaces of $\Sigma_g$. Let $r$ be a rotation of $\Sigma_g$ by $2\pi/k$ about its axis.

We will construct our desired elements by mapping $S_g$ to $\Sigma_g$, performing a rotation $r$, and then mapping back to $S_g$. In doing so we will specify how individual curves map over and back again, and so control how curves are permuted among themselves. In order to construct these maps, it is convenient to label curves on $S_g$ and $\Sigma_g$ as follows. Take on the one hand the usual embedding of the Humphries curves in $S_g$ as shown in Figure \ref{fig:Hum} and the upper-left of Figure \ref{fig:ghat}. We will refer to the $\alpha_i$, $\beta_i$ and $\gamma_i$ curves as $\alpha$ \textit{curves}, $\beta$ \textit{curves}, and $\gamma$ \textit{curves}, respectively. The Humphries curves consist of a chain of $2g-1$ curves that alternate between $\beta$ curves and $\gamma$ curves as well as two additional $\alpha$ curves. On the other hand, take the $k$-fold symmetric embedded surface $\Sigma_g$ and embed in each $\sigma_i$ a chain of curves of length $2g_i-1$, where $g_i$ is the genus of $\sigma_i$. See Figure \ref{fig:54top} and the upper-left of Figure \ref{fig:ghat}. We label the curves in these chains also as $\beta$ and $\gamma$ curves and note that they are embedded so that $r(\beta_i)=\beta_{i+1}$, $1 \leq i \leq g_{i}-1$, and $r(\gamma_i)=\gamma_{i+1}$, $1 \leq i \leq g_{i}-2$. We will use these labels as ``local coordinates"---saying, for instance, ``the $\beta_2$ curve in $\sigma_3$." In $\sigma_1$ we additionally embed two $\alpha$ curves, $\alpha_1$ and $\alpha_2$, such that each respectively intersects $\beta_1$ and $\beta_2$ once, intersects no other curves, and $r(\alpha_1)=\alpha_2$.

We are now prepared to define three maps $\hat{f}$, $\hat{g}$, and $\hat{h}$ from $S_g$ to $\Sigma_g$. We will use these homeomorphisms to define three mapping classes of the form $\hat{f}^{-1}r\hat{f}$ and will show that these mapping classes (1) have order $k$, (2) satisfy Lemma \ref{lemma:four} and (3) put the Humphries curves into the same orbit.

We first construct a homeomorphism $\hat{f}$. The $\beta$ and $\gamma$ Humphries curves in $S_g$ form a chain of length $2g-1$. By removing some of the $\gamma$ curves from this chain, we form $a+b$ smaller chains. The first $a$ chains will be $2k-1$ curves long and the last $b$ chains will be $2k-3$ curves long. We accomplish this by removing every $k$th $\gamma$ curve up to $\gamma_{ak}$, and then every $(k-1)$st $\gamma$ curve thereafter. We call these the \textit{excluded} $\gamma$ curves. Call the resulting chains $F_i$, keeping their sequential order. We add to $F_1$ the curves $\alpha_1$ and $\alpha_2$.

\begin{figure}[ht]
\centering
\includegraphics[width=400pt]{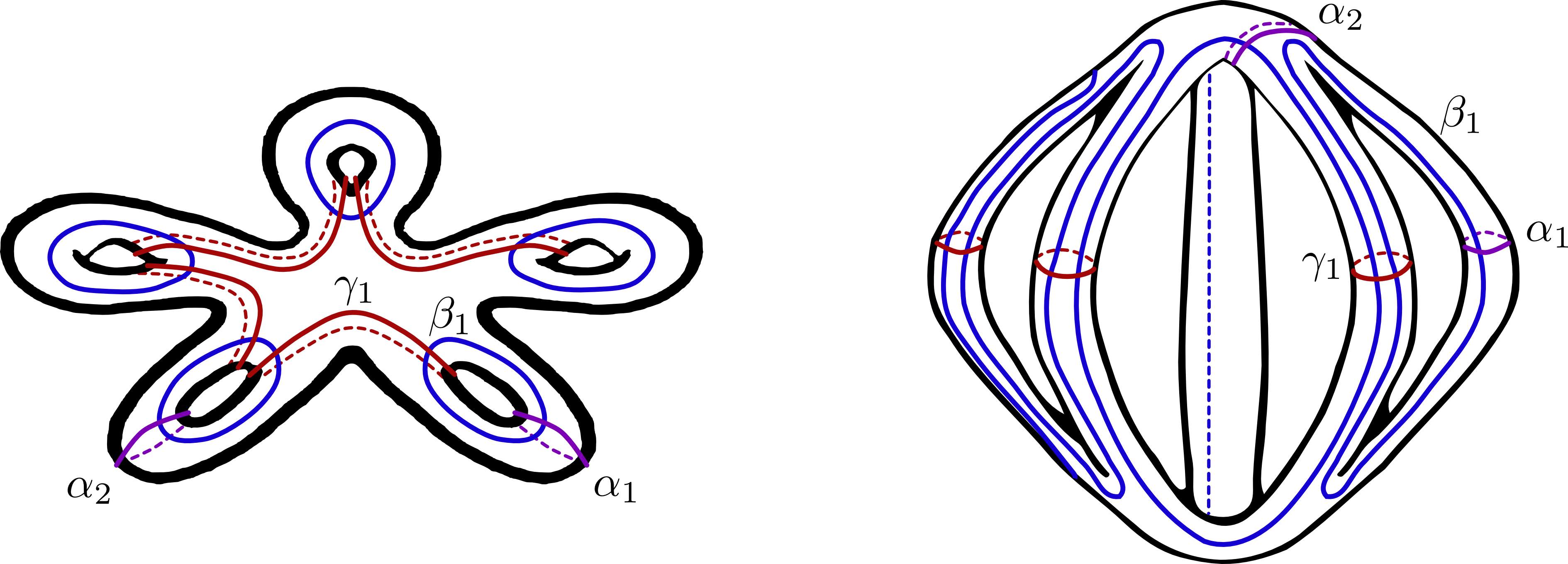}
\caption{The embeddings of chains of curves in the $\sigma_i$. The $\alpha$ curves are only included in the subsurface $\sigma_1$.}
\label{fig:54top}
\end{figure}
Note that the curves in each $F_i$ form a chain of simple closed curves in $S_g$ and the union of the $F_i$ is nonseparating. (Note that $F_1$ is not quite a chain because of the $\alpha_2$ curve.) By the change of coordinates principle, there is a homeomorphism $\hat{f}$ that takes curves in the $F_i$ to the curves in the chains in $\sigma_i$ as specified above, as these chains of curves have the same length. (Recall that in $\sigma_1$ we have two additional curves that correspond to the $\alpha$ curves of $S_g$.) Let $f$ be the mapping class of $\hat{f}^{-1}r\hat{f}$. Then $f$ has order $k$ and maps $\gamma_1$ to $\gamma_2$ as required by Lemma \ref{lemma:four}. 

We now construct $\hat{g}$. We form triples of curves $G_i$, $2 \leq i \leq a+b$. To form each $G_i$, we take the second-to-last $\beta$ curve in $F_{i-1}$, the excluded $\gamma$ curve falling between $F_{i-1}$ and $F_i$, and the second $\beta$ curve in $F_i$.

Note that the curves in $\cup_i G_i$ are in the complement of $L$, that they are nonseparating simple closed curves, that they are disjoint, and that their union is nonseparating. By the change of coordinates principle, there is a homeomorphism $\hat{g}$ that maps the curves in $L$ and the curves $\cup_i G_i$ to a collection of curves of the same topological type in $\Sigma_g$ as follows:
\begin{align*}
\hat{g}\colon  S_g &\longrightarrow \Sigma_g \\
(x_3,x_1,\gamma_1,\gamma_2) &\longmapsto        
 (a,b,c,d) \text{ in } \sigma_1 \text{ as in Figure \ref{fig:5lantern}}  \\
 G_i &\longmapsto (\beta_1,\beta_2,\beta_3) \text{ in } \sigma_i, \ 2\leq i \leq a+b
\end{align*}
Note that the specified image curves are of the same topological type as the four curves in $L$ and the curves in $\cup_i G_i$. Note also that the embedding of the lantern curves depends on whether the genus of $\sigma_1$ is $k$ or $k-1$; see Figure \ref{fig:5lantern}. In both embeddings, the image of $L$ is nonseparating. 
Let $g$ be the mapping class of $\hat{g}^{-1}r\hat{g}$. Then $g$ has order $k$ and maps the pair $(x_3,x_1)$ to the pair $(\gamma_1,\gamma_2)$ as required in Lemma \ref{lemma:four}.

\begin{figure}[ht]
\centering
\includegraphics[width=350px]{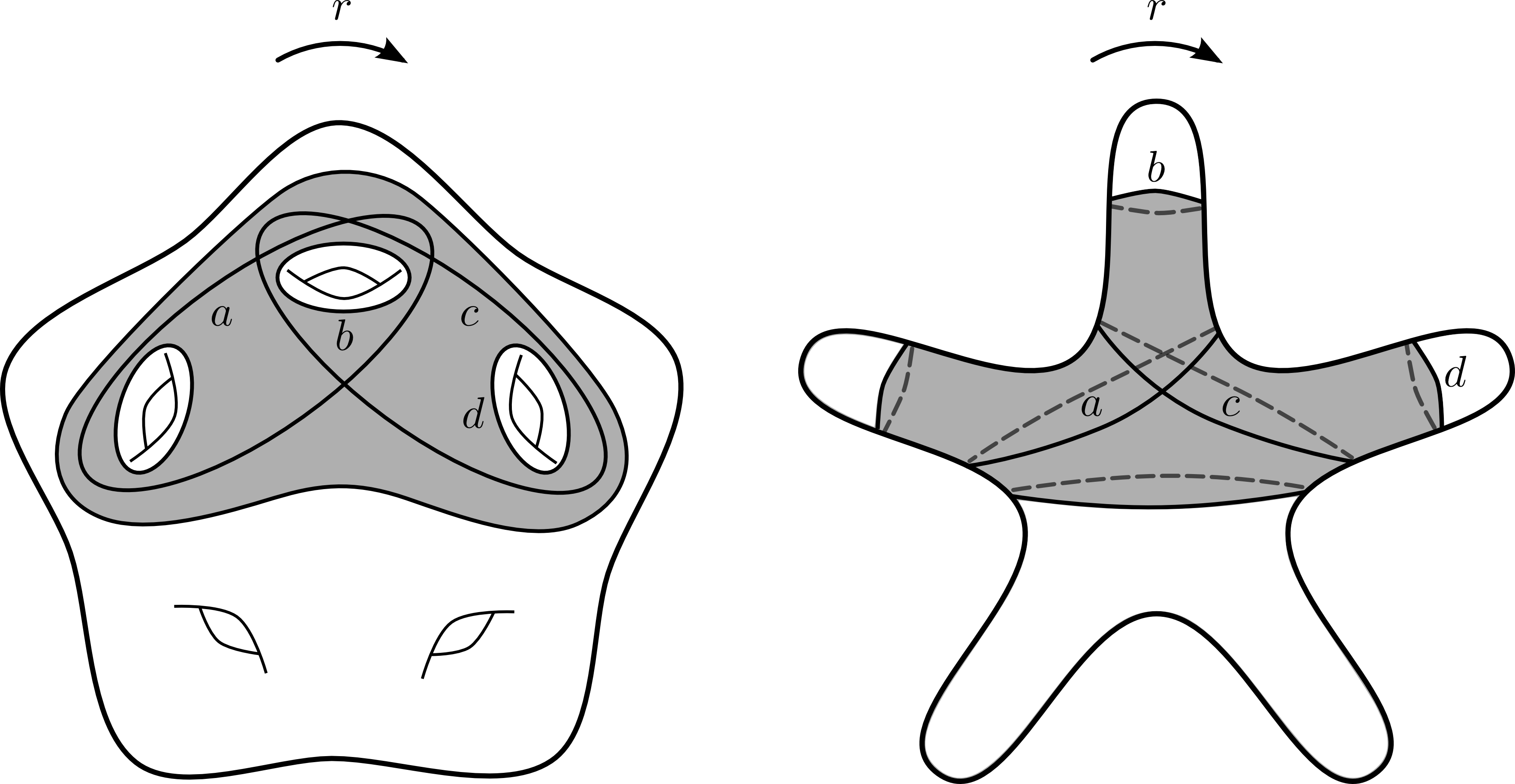}
\caption{The important curves of the subsurface $L$ as embedded by $\hat{g}$ and $\hat{h}$ in $\sigma_1$ when the genus of $\sigma_1$ is $k$ and when it is $k-1$. The latter is depicted as seen from above.}
\label{fig:5lantern}
\end{figure}

\begin{figure}[p]
\centering
\includegraphics[width=\textwidth]{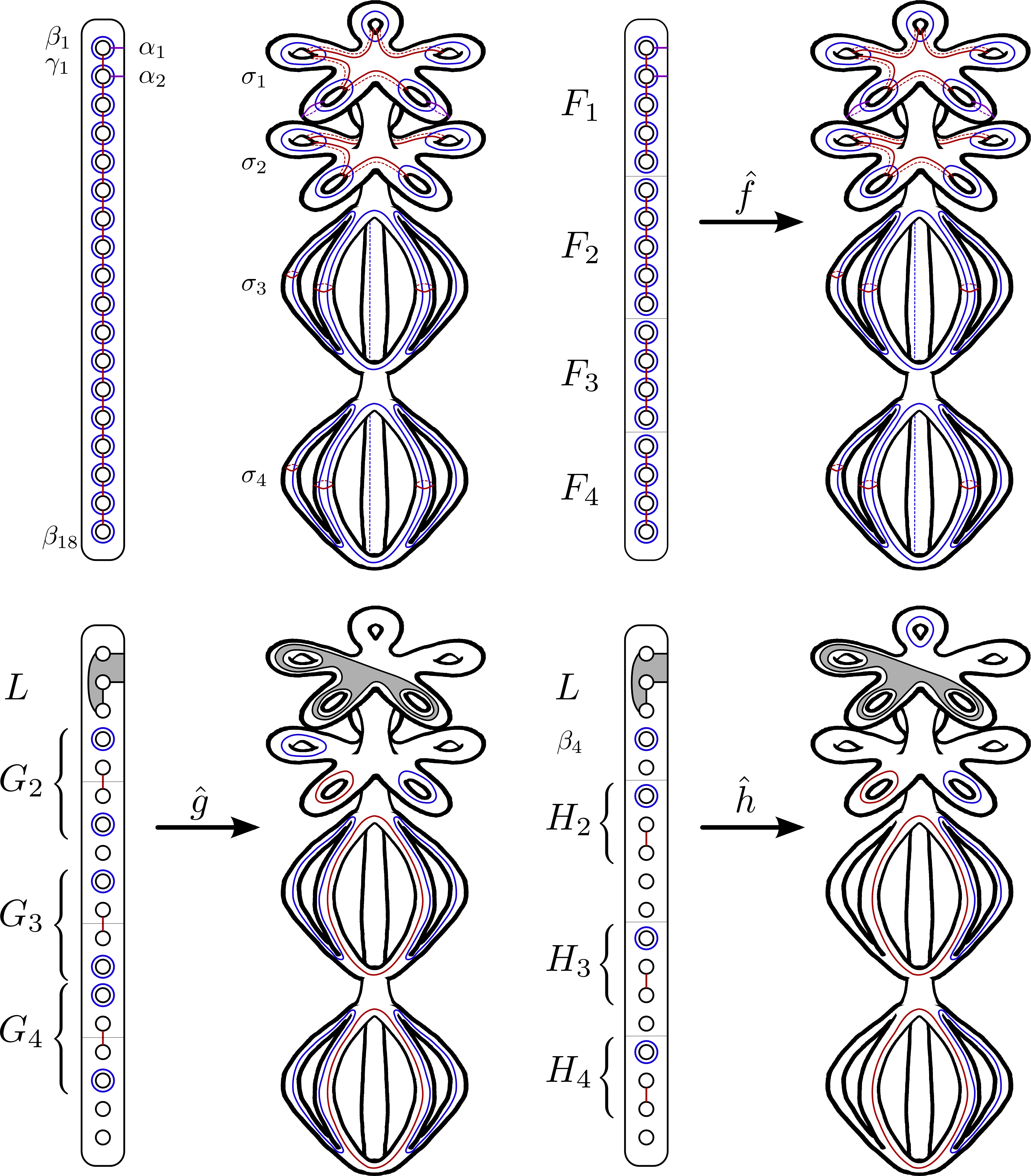}
\caption{The Humphries curves in $S_{18}$. $\Sigma_{18}$ with ``local coordinate" curves in each $\sigma_i$. The curves in the $F_i$, $G_i$, $H_i$, and the subsurface $L$, along with their images under $\hat{f}$, $\hat{g}$, and $\hat{h}$.}
\label{fig:ghat}
\end{figure}

Finally, we construct $\hat{h}$.  We form pairs of curves $H_i$, $2 \leq i \leq a+b$. Each $H_i$ consists of the first $\beta$ curve and the second $\gamma$ curve in $F_i$. The map $\hat{h}$ also specifies the mapping of the Humphries curve $\beta_4$. Let $\hat{h}$ be a homeomorphism that maps curves as follows:
\begin{align*}
\hat{h}\colon  S_g &\longrightarrow \Sigma_g \\
(\gamma_1,\gamma_2,x_2,\alpha_2) &\longmapsto        
 (a,b,c,d) \text{ in } \sigma_1 \text{ as in Figure \ref{fig:5lantern}}\\
 \beta_4 &\longmapsto r(d) \text{ in }\sigma_1\\
 H_i &\longmapsto (\beta_1,\beta_2) \text{ in } \sigma_i, \ 2\leq i \leq a+b
\end{align*}
Let $h$ be the mapping class of $\hat{h}^{-1}r\hat{h}$. Then $h$ has order $k$ and $h^{-1}$ maps the pair $(x_2,\alpha_2)$ to the pair $(\gamma_1,\gamma_2)$ as required by Lemma \ref{lemma:four}. (We use $h^{-1}$ here because we want all of our generators to be conjugate and because the lantern curves are in a fixed cyclic order.)

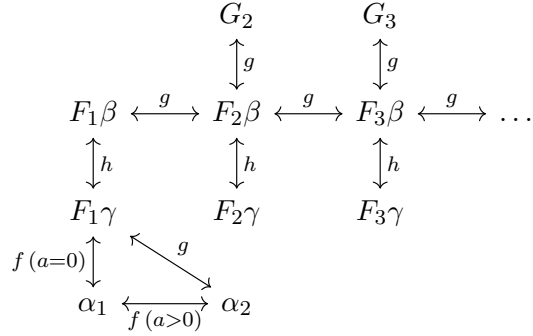
\begin{figure}[ht]
\begin{tikzcd}
& G_2 \arrow[d,leftrightarrow,"g"] & G_3 \arrow[d,leftrightarrow,"g"]\\
F_1\beta \arrow[r,leftrightarrow,"g"] \arrow[d,leftrightarrow,"h"]  & F_2\beta \arrow[r,leftrightarrow,"g"] \arrow[d,leftrightarrow,"h"] & F_3\beta \arrow[r,leftrightarrow,"g"] \arrow[d,leftrightarrow,"h"] & \dots\\
F_1\gamma \arrow[leftrightarrow,"g",to=4-2] \arrow[leftrightarrow,"f\,(a=0)",to=4-1,swap] & F_2\gamma & F_3\gamma\\
\alpha_1 \arrow[r,leftrightarrow,"f\,(a>0)",swap] & \alpha_2
\end{tikzcd} 
\caption{Each node is a collection of curves that are in the same orbit under the subgroup generated by a single element. Each arrow indicates when a power of an element maps a curve in one collection to a curve in another. Since every Humphries curve is in at least one of the collections, all Humphries curves are in the same orbit under the subgroup $\langle f,g,h \rangle$.}
\label{fig:orbits}
\end{figure}

We now show that the Humphries curves are in the same orbit under $\langle f,g,h\rangle$. Refer to Figure \ref{fig:orbits}. First, note that every $\beta$ and $\gamma$ Humphries curve in $S_g$ is in some $F_i$ or $G_i$. Additionally, powers of $f$ map any $\beta$ curve in $F_i$ to any other $\beta$ curve in the same $F_i$, and likewise for $\gamma$ curves. Call these orbits of curves $F_i\beta$ and $F_i\gamma$. In the same way, a power of $g$ maps any curve in $G_i$ to any other curve in the same $G_i$. Thus at most we have the following orbits of the Humphries curves under $\langle f,g,h\rangle$: the $F_i\beta$, the $F_i\gamma$, the $G_i$, $\alpha_1$, and $\alpha_2$. We will show that these are all in fact a single orbit under $\langle f,g,h\rangle$.

The element $f$ maps $\alpha_1$ to $\alpha_2$ when $\sigma_1$ has genus $k$ and maps $\alpha_1$ to $\gamma_1$ when $\sigma_1$ has genus $k-1$. The element $g$ maps the lantern curve $\gamma_2$ to the lantern curve $\alpha_2$. Thus each of $\alpha_1$ and $\alpha_2$ is in the same orbit as some $\gamma$ curve.

A power of $g$ takes a curve in $F_{i}\beta$ to a curve in $F_{i-1}\beta$ as well as to a curve in $G_i$, $2\leq i \leq a+b$. Additionally, a power of $h$ takes a curve in $F_i\beta$ to a curve in $F_i\gamma$, $1 \leq i \leq a+b$. (Note that in the case of $F_1$, we have $h^2(\gamma_2)=\beta_4$.) Thus all Humphries curves are in a single orbit under $\langle f,g,h \rangle$. By Lemma \ref{lemma:four}, the Dehn twist about $\alpha_1$ may be written as a product in $f$, $g$, $h$, and $T_{\gamma_2}fT^{-1}_{\gamma_2}$. Thus all Dehn twists about the Humphries curves may be written as products in our four elements of order $k$, and so they generate $\Mod(S_g)$.

In the case where $g=ak+1$, we may modify the construction to show that $\Mod(S_g)$ is again generated by four elements of order $k$. Take a connect sum of $a$ surfaces of genus $k$ and insert one further handle along the axis of rotation, as in Lemma \ref{thm:mainlemma}. The element $r$ is a rotation of this embedded surface by $2\pi/k$ and $f$ is defined as above by ignoring the final two Humphries curves $\beta_g$ and $\gamma_{g-1}$. We must modify our other elements of order $k$ so that they place these two additional Humphries curves into the same orbit as all of the other Humphries curves under the subgroup $\langle f,g,h \rangle$. Modify $\hat{g}$ so that it additionally maps $\beta_g$ to $r(d)$ in $\sigma_1$ and modify $\hat{h}$ so that it additionally maps $\gamma_{g-1}$ to $r^2(d)$ in $\sigma_1$. These modifications preserve the fact that the curves involved are disjoint and that their union is nonseparating. The elements $g$ and $h$ now put the curves $\beta_g$ and $\gamma_{g-1}$ into the same orbit as the other Humphries curves. Hence $\Mod(S_g)$ is also generated by four elements of order $k$ when $g=ak+1$.
\end{proof}

\section{Sharpening to three elements}
We now prove the second part of Theorem \ref{thm:maintheorem}.

\begin{proof}We first provide the construction for the cases $k \geq 8$, and then afterwards give the constructions for $k=7$ and $k=6$. Let $k \geq 8$. By assumption we may write $g$ in the form $ak+b(k-1)$. We construct the homeomorphism $\hat{f}:S_g\rightarrow \Sigma_g$ as in the proof of the first part of theorem, except with the modification that it additionally maps the $\alpha$ curve that intersects the final $\beta$ curve in $F_1$ (called $\alpha_\ell$) to the curve $r^{-1}\hat{f}(\alpha_1)$. See Figure \ref{fig:alphas}. We again let $f$ be the mapping class of $\hat{f}^{-1}r\hat{f}$, and $f$ has order $k$.

We now construct $\hat{g}$. Let $G_2$ consist of $\alpha_\ell$, the excluded $\gamma$ curve falling between $F_1$ and $F_2$, the first $\gamma$ curve in $F_2$, and the third $\beta$ curve in $F_2$. For $2 < i \leq a+b$, let $G_i$ be the last $\gamma$ curve in $F_{i-1}$, the excluded $\gamma$ curve between $F_{i-1}$ and $F_i$, the first $\gamma$ curve in $F_i$, and the third $\beta$ curve in $F_i$. See Figure \ref{fig:worst}. Let $\hat{g}$ be a homeomorphism that maps the specified curves as follows:

\begin{align*}
\hat{g}\colon  S_g &\longrightarrow \Sigma_g \\
(x_3,x_1,\gamma_1,\gamma_2,x_2,\alpha_2) &\longmapsto (a,b,c,d,e,f) \text{ as in Figure \ref{fig:8}}\\
(\gamma_3,\gamma_4) &\longmapsto (r^3(e),r^3(f))\\
\beta_6 &\longmapsto r^4(f) \text{ in }\sigma_1\\
G_i &\longmapsto (\beta_1,\beta_2,\beta_3,\beta_4) \text{ in } \sigma_i, \ 2\leq i \leq a+b
\end{align*}

Let $g$ be the mapping class of $\hat{g}^{-1}r\hat{g}$. Then $g$ has order $k$ and maps the pair $(x_3,x_1)$ to the pair $(\gamma_1,\gamma_2)$ as required by Lemma \ref{lemma:four}. Additionally, $g^3(x_2,\alpha_2)=(\gamma_3,\gamma_4)$ and $f^{-2}(\gamma_3,\gamma_4)=(\gamma_1,\gamma_2)$. Therefore we may define $h=f^{-2}g^3$ so that $h$ satisfies Lemma \ref{lemma:four} because $h(x_2,\alpha_2)=(\gamma_1,\gamma_2)$. Thus the Dehn twist about $\alpha_1$ may be written as a product in $f$, $g$, and $T_{\gamma_2}fT^{-1}_{\gamma_2}$.

\begin{figure}[p]
\centering
\includegraphics[width=320pt]{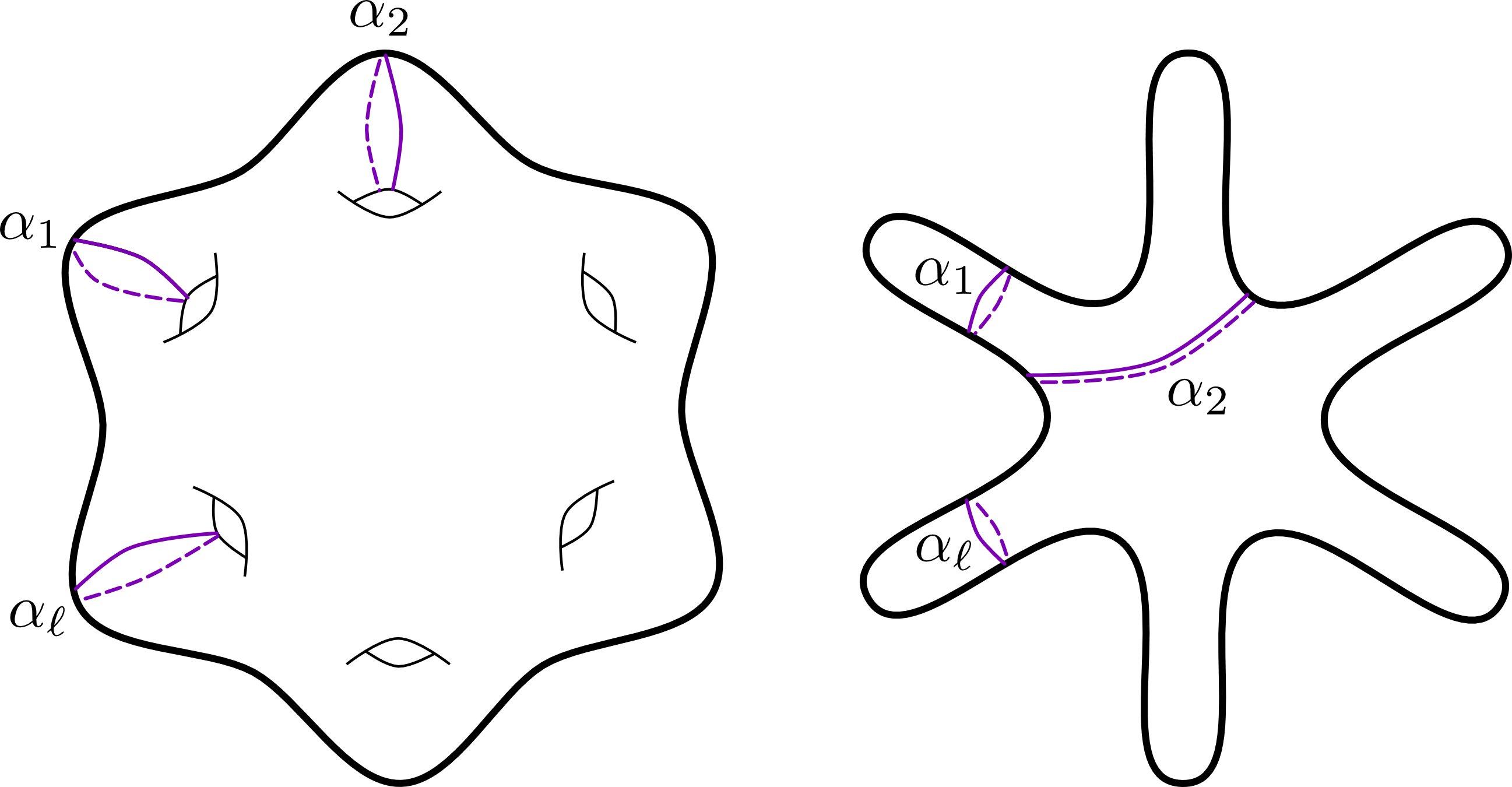}
\caption{The images of $\alpha$ curves embedded in $\sigma_1$ by $\hat{f}$. With this embedding, the rotation $r$ maps $\alpha_\ell$ to $\alpha_1$.}
\label{fig:alphas}
\vspace{10mm}
\centering
\includegraphics[width=320pt]{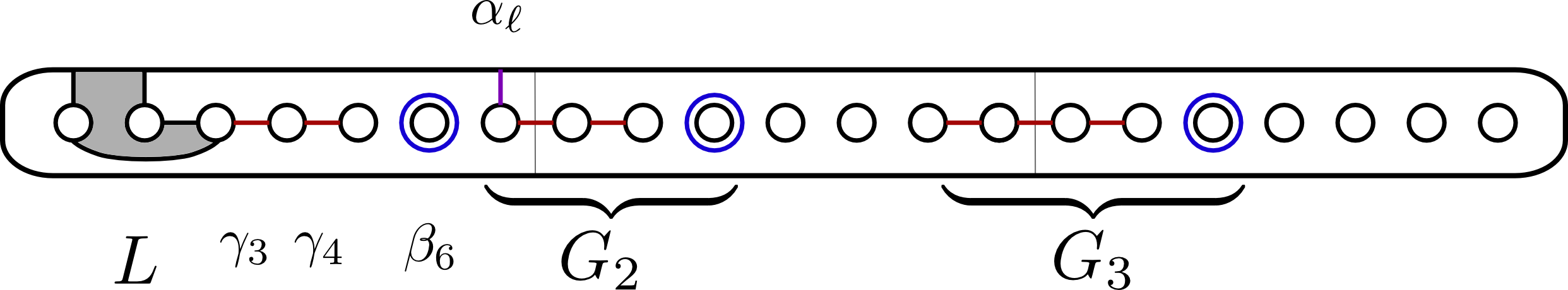}
\caption{The curves mapped by $\hat{g}$ in the case $k=8$, $g=21$. This is a worst case example where $k$ has the smallest possible value and all of the $\sigma_i$ have genus $k-1$.}
\label{fig:worst}
\vspace{10mm}
\centering
\includegraphics[width=400pt]{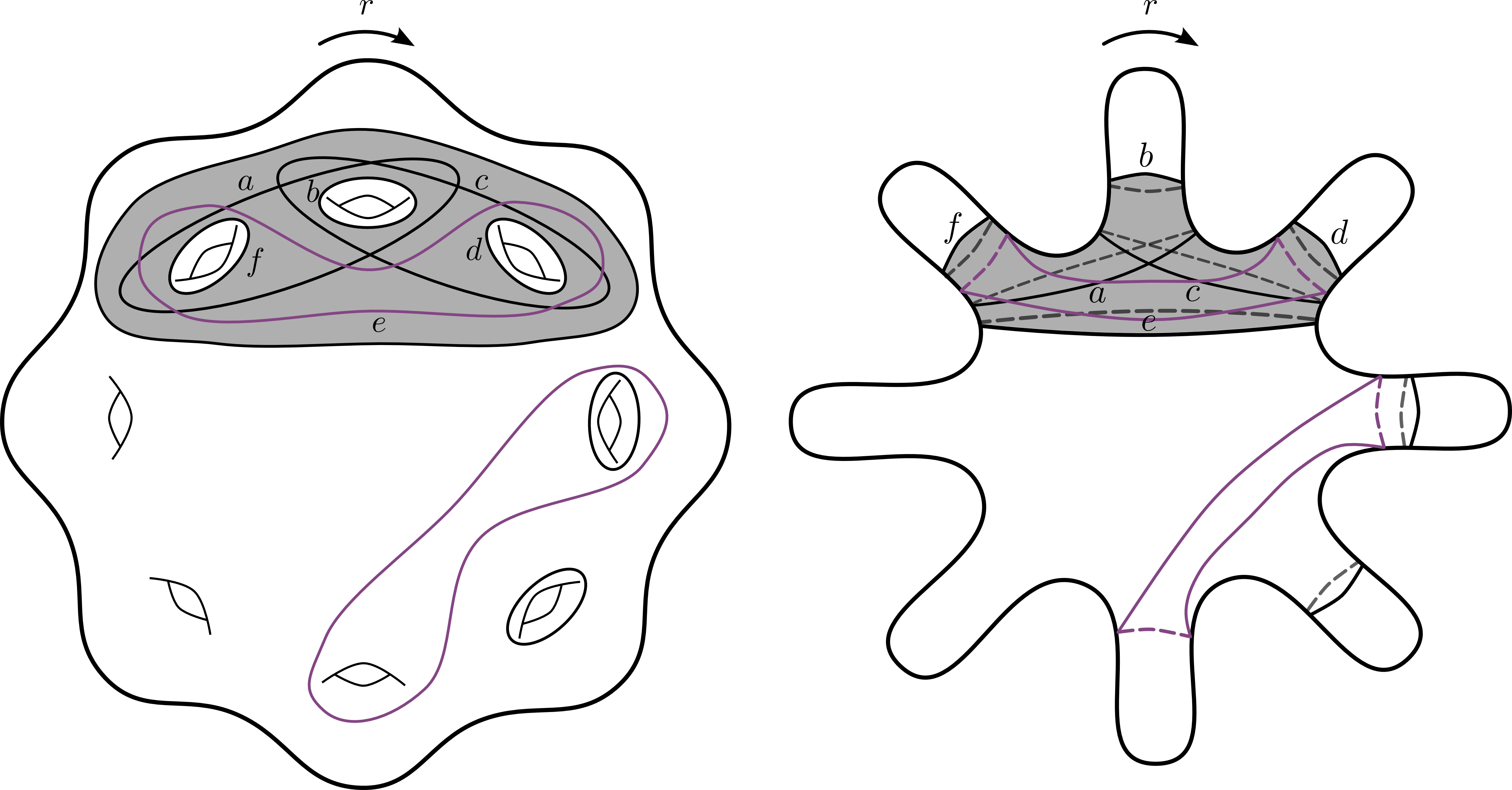}
\caption{The embedding of the subsurface $L$ in $\sigma_1$ when the genus of $\sigma_1$ is $k$ and when it is $k-1$. Also, the embedding of $\gamma_3$, $\gamma_4$, and $\beta_6$. These diagrams depict the case $k=8$.}
\label{fig:8}
\end{figure}

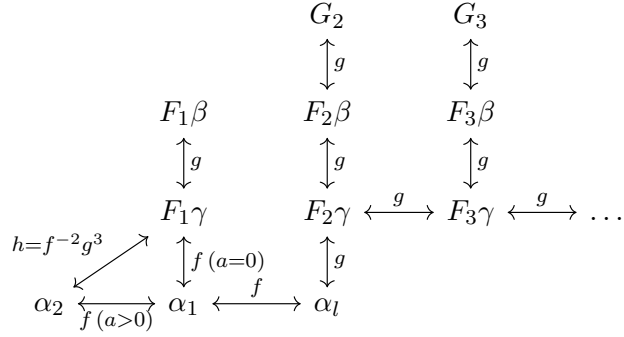
\begin{figure}[ht]
\begin{tikzcd}
& & G_2 \arrow[d,leftrightarrow,"g"] & G_3 \arrow[d,leftrightarrow,"g"]\\
& F_1\beta \arrow[d,leftrightarrow,"g"]  & F_2\beta \arrow[d,leftrightarrow,"g"] & F_3\beta \arrow[d,leftrightarrow,"g"]\\
& F_1\gamma \arrow[d,leftrightarrow,"f\,(a=0)"] \arrow[leftrightarrow,"h=f^{-2}g^3",to=4-1,swap] & F_2\gamma \arrow[r,leftrightarrow,"g"] \arrow[d,leftrightarrow,"g"] & F_3\gamma \arrow[r,leftrightarrow,"g"] & \dots\\
\alpha_2 \arrow[r,leftrightarrow,"f\,(a>0)",swap] & \alpha_1 \arrow[r,leftrightarrow,"f"] & \alpha_{l}\\
\end{tikzcd} 
\caption{Again, each node is a collection of curves that are in the same orbit under the subgroup generated by a single element. Each arrow indicates when a power of an element maps a curve in one collection to a curve in another. Since every Humphries curve is in at least one of the collections, all Humphries curves are in the same orbit under $\langle f,g \rangle$.}
\label{fig:orbits3}
\end{figure}

Finally, we show that all of the Humphries curves are in the same orbit under $\langle f,g \rangle$, as can be seen in Figure \ref{fig:orbits3}. Again, every $\beta$ and $\gamma$ Humphries curve in $S_g$ is in some $F_i\beta$, $F_i\gamma$, or $G_i$. Therefore we have at most the following orbits of the Humphries curves under $\langle f,g\rangle$: the $F_i\beta$, the $F_i\gamma$, the $G_i$, $\alpha_1$, and $\alpha_2$. We will show that these are all in fact a single orbit under $\langle f,g\rangle$. For $i>2$, powers of $g$ put the curves in $G_i$, $F_i\beta$, $F_i\gamma$, and $F_{i-1}\gamma$ in the same orbit. Note that powers of $g$ map $\beta_6$ to $\gamma_2$ and a $\gamma$ curve in $F_2$ to $\alpha_\ell$, while the product $h$ carries $\alpha_2$ to $\gamma_2$. Also, $f$ maps $\alpha_\ell$ to $\alpha_1$ and maps $\alpha_1$ either to $\alpha_2$ or $\gamma_1$, depending on the genus of $\sigma_1$. Considering this, all of the curves are in the same orbit under the subgroup $\langle f,g\rangle$. Therefore the Dehn twist about each of the Humphries curves may be written as a product in the three elements $f$, $g$, and $T_{\gamma_2}fT^{-1}_{\gamma_2}$, and so they generate $\Mod(S_g)$.

In the case where $k=7$, the same construction as above goes through as long as the genus of $\sigma_1$ is 7. As illustrated in Figure \ref{fig:worst}, under this assumption there is enough room to configure all of the required curves in the construction of $\hat{g}$. The hypotheses of the theorem in this case exactly demand that the genus of $\sigma_1$ be 7.

In the case where $k=6$, we use the same construction as above for $f$ and use the following alternative construction for the element $g$ that takes advantage of the three-fold symmetry of a lantern. See Figure \ref{fig:lantern6}. Let $\hat{g}$ be a homeomorphism that maps the specified curves as follows:
\begin{align*}
\hat{g}\colon  S_g &\longrightarrow \Sigma_g \\
(x_3,x_1,\gamma_1,\gamma_2,x_2,\alpha_2) &\longmapsto (a,b,r^2(a),r^2(b),r^{4}(a),r^{4}(b)) \text{ as in Figure \ref{fig:lantern6}}\\
\beta_4 &\longmapsto r(b) \text{ in }\sigma_1\\
G_i &\longmapsto (\beta_1,\beta_2,\beta_3,\beta_4) \text{ in } \sigma_i, \ 2\leq i \leq a+b
\end{align*}

In this case, $g^2$ and $g^4$ play the roles of $g$ and $h$ in Lemma \ref{lemma:four}, and all Humphries curves are again in the same orbit.
\end{proof}
\begin{figure}[ht]
\centering
\includegraphics[width=320pt]{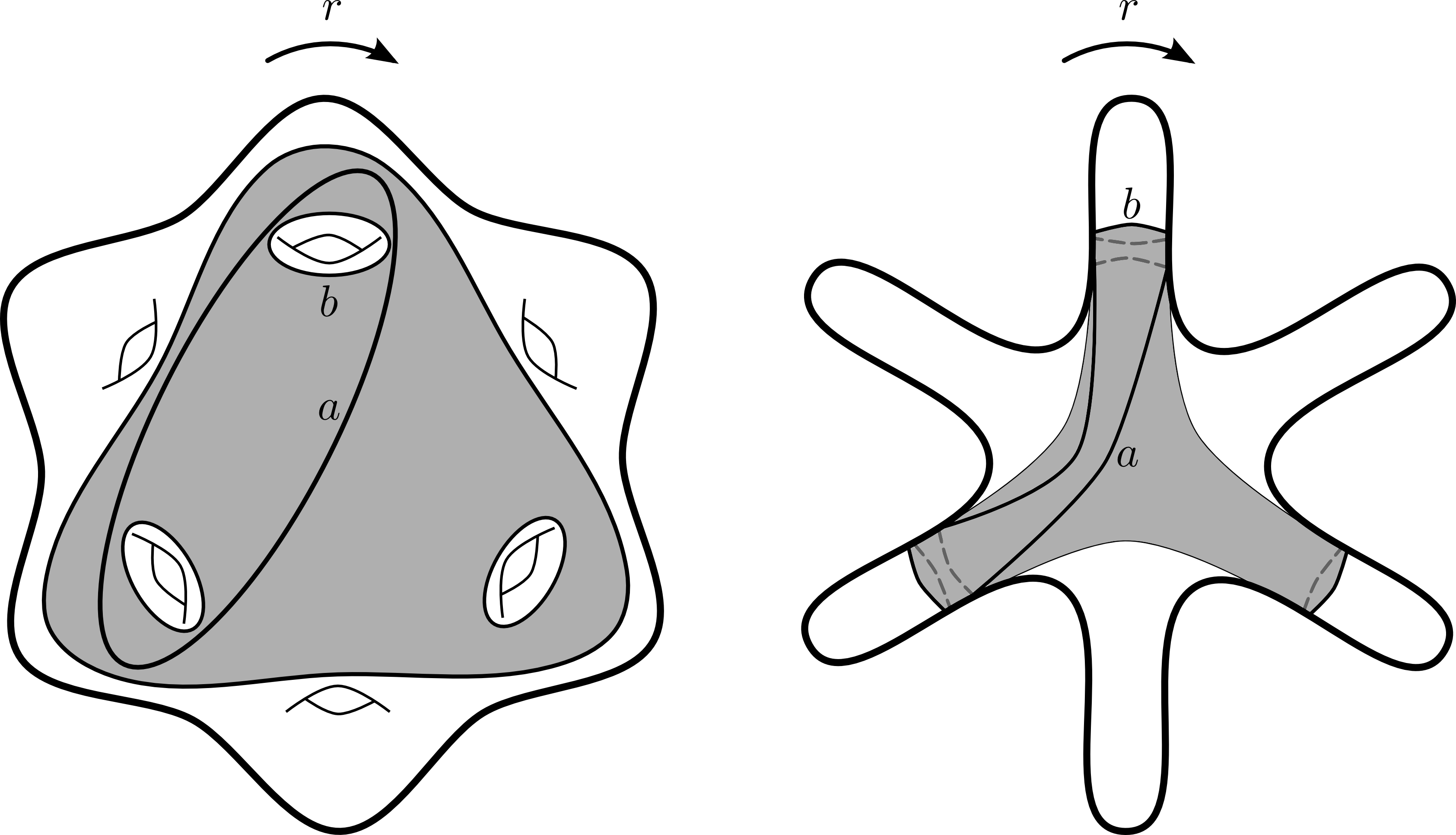}
\caption{The embeddings of the subsurface $L$ in $\sigma_1$ when $k=6$.}
\label{fig:lantern6}
\end{figure}

\section{The symmetric and alternating groups}
In this section we show that $\Sigma_n$ and $\A_n$ can be generated by a uniformly small number of elements of fixed order $k$. Note that since all permutations of odd order are even permutations, elements of odd order $k$ cannot generate $\Sigma_n$. For the case $k=2$, two elements of order 2 cannot generate $\Sigma_n$ or $\A_n$ except for small values of $n$, since any group generated by two involutions is a quotient of a dihedral group. Therefore the result by Nuzhin \cite{Nuzhin} that three elements of order 2 generate $\A_n$ for $n=5$ and $n\geq 9$ is the best possible in general.

We take up the cases where $k \geq 3$. We break up Theorem \ref{thm:perm} into the following two propositions:

\begin{proposition}
Let $k \geq 3$ and $n \geq k$. Then three elements of order $k$ suffice to generate $\Sigma_n$ when $k$ is even and to generate $\A_n$ when $k$ is odd.
\label{thm:symmetric}
\end{proposition}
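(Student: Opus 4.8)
The plan is to reduce to the classical fact that a short ``chain'' of $k$-cycles generates the target group, and then to compress such a chain into only three permutations of order exactly $k$. When $n \leq 3k-2$ we have $\lceil (n-1)/(k-1)\rceil \leq 3$, so by the theorem of Annin and Maglione quoted in the introduction at most three $k$-cycles already generate $\Sigma_n$ (for $k$ even) or $\A_n$ (for $k$ odd), and there is nothing more to do, padding the list with a redundant $k$-cycle if one insists on exactly three generators. So assume $n \geq 3k-1$ and put $m = \lceil (n-1)/(k-1)\rceil \geq 4$. Place on $\{1,\dots,n\}$ the $k$-cycles $c_1,\dots,c_m$, where $c_i$ has support $\{(i-1)(k-1)+1,\dots,(i-1)(k-1)+k\}$, except that $c_m$ is slid to the right so that its support is exactly the last $k$ points $\{n-k+1,\dots,n\}$. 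Then consecutive $c_i$ share exactly one point and non-consecutive $c_i$ have disjoint supports. A short argument shows $\langle c_1,\dots,c_m\rangle$ is the full symmetric (resp.\ alternating) group: the product $c_ic_{i+1}$ is a single $(2k-1)$-cycle $\rho_i$, conjugating $c_i$ by $\rho_i$ produces a $k$-cycle sharing $k-1$ points with $c_i$, and the commutator of two $k$-cycles overlapping in $k-1$ points is a $3$-cycle; shifting this $3$-cycle by powers of $\rho_i$ shows $\langle c_i,c_{i+1}\rangle$ contains $\A_{2k-1}$ on those $2k-1$ points, hence every ``short'' $3$-cycle $(j,j{+}1,j{+}2)$ appears as we run $i$ along the chain, and these generate $\A_n$; for $k$ even each $c_i$ is an odd permutation, which promotes the group to $\Sigma_n$.

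Now compress. Because $c_i$ and $c_{i+2}$ have disjoint supports, the three elements
\[
a = \!\!\prod_{i\equiv 1\ (3)}\!\! c_i,\qquad
b = \!\!\prod_{i\equiv 2\ (3)}\!\! c_i,\qquad
d = \!\!\prod_{i\equiv 0\ (3)}\!\! c_i
\]
are each a product of pairwise disjoint $k$-cycles, hence have order exactly $k$. It remains to prove that $\langle a,b,d\rangle = \langle c_1,\dots,c_m\rangle$. The key point is that one can recover each individual $c_i$ from $a$, $b$, $d$: since $c_i$ and $c_j$ commute unless $|i-j|=1$, the commutators $[a,b]$, $[b,d]$, $[d,a]$ are disjoint products of the single-overlap commutators $[c_i,c_{i+1}]$, each of which is supported on the $2k-1$ points of $\mathrm{supp}(c_i)\cup\mathrm{supp}(c_{i+1})$; conjugating $a$ by such a localized element (whose support is disjoint from all but the first one or two factors of $a$) and multiplying by $a^{-1}$ isolates an element supported only on $\mathrm{supp}(c_1)\cup\mathrm{supp}(c_2)$, and repeating this ``localize and slide'' procedure along the chain extracts each $c_i$ in turn. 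For $k$ even, one of $a,b,d$ can also be arranged to be a single $k$-cycle, so that $\langle a,b,d\rangle$ visibly contains an odd permutation; otherwise recovering any one $c_i$ already supplies one, and together with $\A_n$ from the previous step this yields $\Sigma_n$.

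The step I expect to be the main obstacle is exactly the identity $\langle a,b,d\rangle = \langle c_1,\dots,c_m\rangle$: that collapsing a chain of $k$-cycles into three disjoint products does not lose generating power. The localize-and-slide argument sketched above is elementary but requires careful bookkeeping of supports, and the precise placement of $c_m$ (which handles $n$ not divisible by $k-1$) and the parity matching for $k$ even have to be verified uniformly in $k$ and $n$. If the recovery argument turns out to be awkward, a cleaner alternative is to avoid isolating the $c_i$ altogether and instead check directly that $\langle a,b,d\rangle$ is transitive and primitive and contains a $3$-cycle or a transposition — in practice a single commutator $[a,b]$ already produces a short cycle — and then invoke Jordan's theorem to conclude that the group contains $\A_n$, finishing the $k$-even case by parity as above.
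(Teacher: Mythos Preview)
Your plan is genuinely different from the paper's. The paper handles $n\le 2k-1$ by citing Miller, and for $n\ge 2k$ takes two ``staircase'' products $a,b$ of disjoint step $k$-cycles together with a \emph{single} $k$-cycle $c$ (a $3$-cycle when $k=3$); because $c$ is already localized, the commutator $[a,c]$ is immediately a $3$-cycle, and $2$-transitivity of $\langle a,b,c\rangle$ is then checked by hand so that Jordan's theorem applies directly.

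The step you flag as the obstacle is a genuine gap, and the sketch you give for it does not work as written. You correctly observe that $[a,b]=\prod_j [c_{3j+1},c_{3j+2}]$ is a disjoint product of $3$-cycles, but this product is not ``a localized element whose support is disjoint from all but the first one or two factors of $a$''---its support meets \emph{every} factor of $a$. Conjugating $a$ by it and multiplying by $a^{-1}$ therefore yields another disjoint product with one piece per block, not something supported only on $\operatorname{supp}(c_1)\cup\operatorname{supp}(c_2)$. The same phenomenon occurs for $[b,d]$, $[d,a]$, and for iterated commutators: when $m\approx 3\ell$, each such word carries roughly $\ell$ disjoint $3$-cycles, so no fixed recipe independent of $m$ isolates a single $c_i$ in this manner. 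Your fallback via Jordan's theorem runs into the identical obstruction---the assertion that ``a single commutator $[a,b]$ already produces a short cycle'' is false, since it produces $\ell$ disjoint $3$-cycles, not one---and you have not established primitivity either. Breaking this near-block-diagonal behaviour is not impossible (there is asymmetry at the two ends of the chain, and one of the three basic commutators has one fewer $3$-cycle than the others), but exploiting it and propagating it inward uniformly in $m$ is real work that is entirely absent from the sketch. The paper sidesteps the whole difficulty by making one of the three generators a single $k$-cycle from the outset, so that a $3$-cycle and the primitivity argument are available immediately.
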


\begin{proposition}
Let $k \geq 3$ and $n \geq k+2$. Then four elements of order $k$ suffice to generate $\A_n$ when $k$ is even.
\label{thm:alternating}
\end{proposition}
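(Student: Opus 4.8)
The plan is to leverage Proposition~\ref{thm:symmetric}. Since $k$ is even that proposition does not directly produce generators of an alternating group, but it does produce three elements of order $k$ generating $\Sigma_{n-2}$ --- and the hypothesis $n\ge k+2$ is precisely the condition $n-2\ge k$ that makes it apply. I would transport these into $\A_n$ using the embedding $\iota\colon\Sigma_{n-2}\hookrightarrow\A_n$ in which $\Sigma_{n-2}$ acts on $\{1,\dots,n-2\}$ and $\iota(\sigma)=\sigma$ when $\sigma$ is even while $\iota(\sigma)=\sigma\cdot(n-1\ n)$ when $\sigma$ is odd. Because $(n-1\ n)$ commutes with every permutation of $\{1,\dots,n-2\}$ and the sign is a homomorphism, $\iota$ is an injective homomorphism landing in $\A_n$ and restricting to the identity on $\A_{n-2}$. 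The key point is that $\iota$ preserves order among order-$k$ elements: an even one is unchanged, and an odd one of order $k$ maps to its product with a disjoint transposition, which again has order $\operatorname{lcm}(k,2)=k$ because $k$ is even. Thus the three generators of $\Sigma_{n-2}$ from Proposition~\ref{thm:symmetric} become three elements of order $k$ in $\A_n$ generating $\iota(\Sigma_{n-2})$, a subgroup containing $\A_{n-2}$.

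For the fourth generator I would take an explicit even element of order $k$ that moves the two points $n-1,n$, which $\iota(\Sigma_{n-2})$ barely touches. Let $d$ be the product of the $k$-cycle $(1\ \ n-k+2\ \ n-k+3\ \cdots\ n)$ with the transposition $(2\ 3)$; the inequality $n\ge k+2$ makes $\{2,3\}$ disjoint from the support of that $k$-cycle, so $d$ has order $\operatorname{lcm}(k,2)=k$, and $d$ is even, being a product of two odd permutations. (Note that $k$ even forces $k\ge 4$, so the degree conditions used below, such as $n-2\ge k\ge 4\ge 3$, all hold.)

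It remains to show $G:=\langle\iota(\Sigma_{n-2}),d\rangle=\A_n$. Since $\A_{n-2}\le\iota(\Sigma_{n-2})\le G\le\A_n$, it is enough to see $\langle\A_{n-2},d\rangle=\A_n$, and I would argue in three standard steps. (i) $G$ is transitive: $\A_{n-2}$ is transitive on $\{1,\dots,n-2\}$ and $d$ carries $n-2\mapsto n-1\mapsto n$. (ii) $G$ is primitive: a nontrivial block through the point $1$ either lies inside $\{1,\dots,n-2\}$, where the primitivity of $\A_{n-2}$ forces it to be all of $\{1,\dots,n-2\}$; or it meets $\{n-1,n\}$, where invariance under $\A_{n-2}$ (which fixes $n-1$ and $n$) forces it to contain all of $\{1,\dots,n-2\}$; in either case the block has size $n-2$ or $n-1$, which does not divide $n$ once $n\ge 6$. (iii) $G$ contains a $3$-cycle because $\A_{n-2}$ does. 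A primitive permutation group of degree $n$ containing a $3$-cycle must contain $\A_n$ by a classical theorem of Jordan, and as $G\le\A_n$ this gives $G=\A_n$; hence these four elements of order $k$ generate $\A_n$.

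The routine parts are the homomorphism property of $\iota$ and the order-and-parity computations for the images and for $d$; the one step needing genuine care is (iii) together with the primitivity check in (ii), that is, confirming that adjoining the single element $d$ to the copy of $\Sigma_{n-2}$ already reaches all of $\A_n$. The role of $n\ge k+2$ is transparent throughout: it is simultaneously what lets Proposition~\ref{thm:symmetric} run on the first $n-2$ letters and what leaves room for the transposition that repairs parities, both inside $\iota$ and inside $d$.
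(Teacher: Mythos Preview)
Your proof is correct and takes essentially the same approach as the paper: both embed $\Sigma_{n-2}$ into $\A_n$ by multiplying each odd generator by a transposition on the two new points, then adjoin a fourth even element of order $k$ (a $k$-cycle times a disjoint transposition) that moves those two points. The only cosmetic difference is the endgame---the paper checks directly that all $3$-cycles are obtained by conjugating by powers of the fourth element, whereas you verify primitivity and invoke Jordan's theorem.
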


The bound in Proposition \ref{thm:alternating} is $n \geq k+2$, and this is different from the bound in Proposition \ref{thm:symmetric}. In the case when $k$ is even and $k=n-1$, $\A_n$ is not always generated by elements of order $k$. For instance, if $k=n-1$ is a power of 2, then the only elements of order $k$ are the $k$-cycles, but these are odd permutations and so cannot generate $\A_n$.

It is possible that two elements of order $k$ may in fact suffice for $k \geq 3$. In this direction, Miller \cite{Miller2} shows that if \mbox{$2 \leq k \leq n \leq 2k-1$}, two $k$-cycles suffice to generate $\Sigma_n$ when $k$ is even and $\A_n$ when $k$ is odd. In Section 8 we give a construction for a pair of elements of order $k$ that have generated $\Sigma_n$ for $k$ even and $\A_n$ for $k$ odd for all values $(k,n)$ that we have tested by computer calculation, excepting a few small values that can be handled separately.

\subsection*{Preliminaries}
We take $N=\{0, \ \dots, \ n-1\}$ as our underlying permuted set. Denote by $h_{k,n}(a)$ a \textit{step $k$-cycle}, which is a $k$-cycle of the form \mbox{$(a \ a+1 \ \cdots \ a+k-1)$} with entries taken mod $n$. We further define $s_{k,n}(a,\ell)$ to be a \textit{sequential step product} so that

$$s_{k,n}(a,\ell)=\prod_{i=1}^{\ell} h_{k,n}(a+ (i-1) \cdot k)$$
with entries taken mod $n$. By way of example, we have 

$$s_{4,15}(6,3)=(6 \ 7 \ 8 \ 9)(10 \ 11 \ 12 \ 13)(14 \ 0 \ 1 \ 2).$$
Note that in order to obtain a product of disjoint cycles, the largest value that $\ell$ may take is $\left\lfloor n/k\right\rfloor$.

The main result about permutation groups that we use in our proofs is Jordan's theorem. Recall that a permutation group $G$ is \textit{transitive} if it acts transitively on the underlying permuted set, and it is \textit{2-transitive} if it acts transitively on ordered pairs of distinct elements of the underlying permuted set. A permutation group $G$ is \textit{primitive} if it is transitive and if no nontrivial partition of the underlying permuted set is preserved by the action of $G$.

\begin{theorem*}[Jordan]
Let $G$ be a primitive subgroup of $\Sigma_n$, and suppose $G$ contains a $p$-cycle where $p$ is prime and $p \leq n-3$. Then $G$ is either $\A_n$ or $\Sigma_n$.
\end{theorem*}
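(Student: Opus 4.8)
The plan is to reduce to two classical base cases — a primitive subgroup of $\Sigma_n$ containing a transposition is all of $\Sigma_n$, and a primitive subgroup containing a $3$-cycle contains $\A_n$ — and then to show that the hypothesized $p$-cycle forces one of these to occur, so that in every case $G \supseteq \A_n$ and hence $G$ is $\A_n$ or $\Sigma_n$.

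For the base cases I would argue with $G$-invariant equivalence relations. If $G$ is primitive and contains a transposition, declare $i \sim j$ when $i=j$ or $(i\ j) \in G$; since $(j\ k)(i\ j)(j\ k) = (i\ k)$, this relation is transitive, and it is manifestly $G$-invariant, so primitivity forces it to be universal, whence $G$ contains every transposition and $G = \Sigma_n$. If instead $G$ is primitive and contains a $3$-cycle, declare $i \sim j$ when $i=j$ or some $3$-cycle in $G$ moves both $i$ and $j$; a short case analysis with conjugates of two overlapping $3$-cycles shows $\sim$ is transitive, it is again $G$-invariant, and so universal, and then a connectivity argument — using that two $3$-cycles with overlapping supports generate the full alternating group on the union of their supports — shows that the normal subgroup of $G$ generated by all its $3$-cycles contains $\A_n$.

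It remains to extract a $3$-cycle (or, when $p=2$, a transposition) from the given $p$-cycle $\sigma$; there is nothing to do unless $p \geq 5$. Let $\Sigma$ be the support of $\sigma$. Since $2 \leq |\Sigma| = p < n$ and $G$ is primitive, $\Sigma$ is not a block, so some $g \in G$ has $\Sigma^g \neq \Sigma$ with $\Sigma \cap \Sigma^g \neq \varnothing$. One then plays $\sigma$ off against its conjugate $\sigma^g$: suitable commutators and products of $\sigma$ and $\sigma^g$ are supported inside $\Sigma \cup \Sigma^g$ and, when the overlap is under control, have strictly smaller support than $\sigma$ does. Iterating this, and using the primality of $p$ to exclude the degenerate configurations in which no shrinking occurs, one descends to a nonidentity element of $G$ with a three-point support, which is necessarily a $3$-cycle.

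Controlling the cycle types and supports of these iterated commutators is the main obstacle, and it is exactly here that the hypothesis $p \leq n-3$ — i.e.\ that $\sigma$ fixes at least three points — enters: the extra fixed points give the room to translate $\Sigma$ around $\Omega$ and to keep the commutators from collapsing to the identity. The bound is sharp, as $\mathrm{PGL}(2,5)$ acting on $6$ points (a $5$-cycle with $p = n-1$) and $\mathrm{PSL}(2,8)$ acting on $9$ points (a $7$-cycle with $p = n-2$) show. An alternative to this hands-on descent is to invoke the structure theory of Jordan groups, which classifies primitive groups possessing a Jordan set and yields the theorem directly; but the descent keeps the argument self-contained.
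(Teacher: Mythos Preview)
The paper does not prove Jordan's theorem; it merely states it as a classical result and refers the reader to Isaacs's \emph{Finite Group Theory} for background. So there is no in-paper proof to compare your proposal against.

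As a standalone sketch, your outline follows the classical route. The two base cases are correct and your arguments for them are fine. The reduction step, however, is where all the content lies, and your description of it is not yet a proof. The assertion that ``suitable commutators and products of $\sigma$ and $\sigma^g$ \dots\ have strictly smaller support than $\sigma$'' is not automatic: for two $p$-cycles with overlapping supports $\Sigma$ and $\Sigma^g$, the commutator $[\sigma,\sigma^g]$ is only guaranteed to be supported on $\Sigma \cup \Sigma^g$, which can have up to $2p-1$ points. What does work is the special case $|\Sigma \cap \Sigma^g| = p-1$: if one arranges $\sigma$ and $\sigma^g$ correctly there (not just the supports but the actual cycle structure), then $\sigma(\sigma^g)^{-1}$ is a $3$-cycle. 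But primitivity alone only gives you \emph{some} nontrivial overlap, not overlap of size exactly $p-1$ with compatible cycle structure. The standard remedies --- showing first that $G$ is $(n-p+1)$-transitive via the Jordan-set machinery, and then using that transitivity to manufacture the right conjugate --- are exactly what your last paragraph calls ``the structure theory of Jordan groups,'' and they are not really optional extras: they (or something equivalent) are what makes the descent go through. Your invocation of ``primality of $p$ to exclude the degenerate configurations'' is likewise doing unexplained work. So the strategy is right, but the proposal as written has a genuine gap at the step that carries the weight of the theorem.
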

For additional background on primitivity and Jordan's theorem, see for instance the book of Isaacs \cite[Chapter~8B]{Isaacs}.

\begin{proof}[Proof of Proposition \ref{thm:symmetric}]
Miller \cite{Miller2} showed that for $n \leq 2k-1$, two $k$-cycles generate $\Sigma_n$ when $k$ is even and $\A_n$ when $k$ is odd. Thus we may assume that $n \geq 2k$. Consider the permutation group $G$ on the set $N$ generated by the following elements:
\begin{align*}
    a &=s_{k,n}(0,\left\lfloor n/k\right\rfloor)\\
    b &=\begin{cases}
    s_{k,n}(k-1,\left\lfloor n/k\right\rfloor), &\text{if $k \nmid n$}\\
    s_{k,n}(k-1,\left\lfloor n/k\right\rfloor-1), &\text{if $k | n$}
  \end{cases} \\
    c &=\begin{cases}
    (0 \ 1 \ 2), & \text{if $k=3$}\\
    (0 \ 1 \ 2) \ h_{k,n}(0)=(1 \ 0 \ 2 \ \cdots \ k-1), & \text{if $k>3$}
    \end{cases}
\end{align*}
All three elements are products of disjoint $k$-cycles and so have order $k$. As an illustration, here are the three elements in the case $k=5$, $n=18$.

 \[\setlength{\arraycolsep}{4pt}
  \begin{matrix}
  a & = &  \llap{(}0& 1 & 2 & 3 & 4 \rlap{)}&\llap{(}5 & 6 & 7 & 8 & 9\rlap{)}&\llap{(}10 & 11 & 12 & 13 & 14 \rlap{)}\\
  b & = &   &  &  &  & \llap{(}4 & 5&6 & 7 & 8 \rlap{)}&\llap{(}9 & 10 & 11 & 12 & 13\rlap{)}& \llap{(}14 & 15 & 16 & 17 & 0\rlap{)}\\
  c & = & \llap{(}1 & 0 & 2 & 3 & 4\rlap{)}
  \end{matrix}
\]

To apply Jordan's theorem, we must show that $G$ contains a small prime cycle and that $G$ is primitive. Since $n \geq 2k \geq 6$, a 3-cycle will satisfy the small prime cycle requirement in Jordan's theorem. If $k=3$, then $c$ is a 3-cycle. If $k>3$, the commutator $[a,c]$ is the 3-cycle $(0 \ 1 \ k-2)$.

It remains to show that $G$ is primitive, and it suffices to prove the stronger condition that $G$ is 2-transitive. We thus aim to show that for an arbitrary ordered pair $(i,j)\in N^2$, $i \neq j$, there exists $g \in G$ such that $g(i,j) = (k-2,k-1)$. $G$ is certainly transitive, since the overlapping cycles of $a$ and $b$ allow any element in $N$ to be carried to any other. Let $g_1$ be a product in $a$ and $b$ such that $g_1(i)=k-2$. We now seek a $g_2$ that carries $j$ to $k-1$ while keeping $i$ at $k-2$. We reduce to the case where $g_1(j)$ sits outside of $S=\{0, \dots, k-1\}$. If $g_1(j)$ sits in $S$, then we may first move $g_1(j)$ outside of $S$ while keeping $i$ at $k-2$. Either $b$ acts on $g_1(j)$ while keeping $i$ at $k-2$ (and so can move $g_1(j)$ out of $S$), or else $c^mbc^{-m}$ does so for some power of $c$. Therefore $i$ sits at $k-2$ and $g_1(j)$ sits outside of $S$. Note that $b$ and the product $c^{-1}a$ each fix $k-2$. Since $b$ and $c^{-1}a$ act transitively on $N-\{0, \dots, k-2\}$, we may form a product $g_2$ in $b$ and $c^{-1}a$ so that $g_2g_1(i,j)=(k-2,k-1)$. Thus $G$ is 2-transitive, and so it is primitive.

Applying Jordan's theorem, we have that $G$ is either $\A_n$ or $\Sigma_n$. If $k$ is even, then $G$ contains the odd permutation $c$, and therefore $G \cong \Sigma_n$. If $k$ is odd, then all of the generators of $G$ are even permutations, and so $G \cong \A_n$. \end{proof}

\begin{proof}[Proof of Proposition \ref{thm:alternating}]
Take $k$ to be even and $n \geq k+2$. To show that $\A_n$ is generated by at most four elements of order $k$, we will modify the generating set for $\Sigma_{n-2}$ comprised of three elements of order $k$ from the proof of Proposition \ref{thm:symmetric} or the two $k$-cycles given by Miller. First, add elements $\{a,b\}$ to the underlying set of permuted objects $\{0, \ \dots, \ n-3\}$. For each odd permutation in the generating set for $\Sigma_{n-2}$, multiply it by the transposition $(a \ b)$ so that it becomes an even permutation. For each even permutation of the generating set for $\Sigma_{n-2}$, let it fix $a$ and $b$ so that it remains an even permutation. Finally, add to the generating set the element $t=(a \ b \ 3 \ 4 \ \cdots \ k)(1 \ 2)$. This is an even permutation of order $k$.

These elements together generate $\A_n$, since every generator is an even permutation and every 3-cycle on $\{0, \ \dots, \ n-3, \ a, \ b \}$ is generated by them. To see this last fact, observe that the 3-cycles on $\{0, \ \dots, \ n-3\}$ are generated by the modified elements and also that any 3-cycle involving $a$ or $b$ is a conjugation of one of these 3-cycles by a power of $t$. Therefore we have a generating set for $\A_n$ comprised of at most four elements of even order $k$.\end{proof}

\section{Results for automorphism groups of free groups and for linear groups}

In this section we use our results for $\A_n$ and $\Mod(S_g)$ to derive similar results for $\Aut^+(F_n)$, $\Out^+(F_n)$, $\SL(n,\mathbb{Z})$, and $\Sp(2n,\mathbb{Z})$. For the first three families of groups, the numbers of elements of order $k$ required in each case are simply the sums of the number of elements of order $k$ needed to generate $\A_n$ and $\Mod(S_g)$ individually. For $\Sp(2n,\mathbb{Z})$, the number of elements of order $k$ needed is the same as for $\Mod(S_g)$, $g=n$.

\begin{proof}[Proof of Theorem \ref{thm:others}]
Fix $k \geq 5$. As mentioned above, Gersten \cite{Gersten} gave a presentation for $\Aut^+({F_n})$ and in particular showed that $\Aut^+({F_n})$ is generated by the collection of left and right transvections of one free group generator by another. It is immediate that $\Aut^+({F_n})$ is generated by a single left transvection, a single right transvection, and a collection of elements that act 2-transitively on the generators of $F_n$. We must therefore write such elements as products in eight elements of order $k$ whenever $n \geq 2(k-1)$.

First, there is a natural inclusion map from $\A_n$ to $\Aut^+(F_n)$ where
a permutation maps to the automorphism that effects that permutation on
the generators of $F_n$. Through this inclusion, $\A_n$ gives a 2-transitive action on the generators $F_n$. By Theorem \ref{thm:perm}, $\A_n$ may be generated by at most four elements of order $k$ for $n \geq k+2$. Therefore there are four elements of order $k$ in $\Aut^+(F_n)$ that effect a 2-transitive action on the generators of $F_n$ for $n \geq 2(k-1)$.

Next, we show that the generating set for $\Mod(S_{k-1})$ constructed in the proof of Theorem \ref{thm:maintheorem} also gives a generating set for $\Mod(S_{k-1,1})$, where $S_{k-1,1}$ denotes a surface of genus $k-1$ with a single puncture. The homeomorphisms of $S_{k-1}$ of order $k$ produced in our proof of Theorem \ref{thm:maintheorem} have two fixed points $p_1$ and $p_2$, since in fact the rotation $r$ fixes four points where the axis of rotation intersects the surface. Therefore we may place a puncture at $p_1$ so that it is fixed by all homeomorphisms produced in the proof of Theorem \ref{thm:maintheorem}, and therefore also by all corresponding mapping classes.

By taking the other fixed point $p_2$ as a base point, we may embed in $S_{k-1,1}$ a system of generators $a_1,\dots,a_{2(k-1)}$ for $\pi_1(S_{k-1,1},p_2)\cong F_{2(k-1)}$. We therefore have that
$$\Mod(S_{k-1,1},p_2) \xhookrightarrow{i} \Aut^+(\pi_1(S_{k-1,1})) \cong \Aut^+(F_{2(k-1)}).$$
The image of a Dehn twist under this inclusion is a transvection. By a standard construction, specific Dehn twists $T_{c_1}$ and $T_{c_2}$ are mapped by $i$ to the left transvection $t_1:a_1 \rightarrow a_2a_1$ and the right transvection $t_2:a_3 \rightarrow a_3a_4$, where each automorphism $t_i$ fixes all other generators of the free group.

We can take images of the transvections $t_i$ under the following inclusions.
$$\Aut^+(F_{2(k-1)}) \xhookrightarrow{} \Aut(F_{2(k-1)}) \xhookrightarrow{} \Aut(F_{2(k-1)+1}) \xhookrightarrow{} \Aut(F_{2(k-1)+2}) \xhookrightarrow{} \dots$$
In particular, these images are still a left and a right transvection by a generator of the free group. They are also elements of the corresponding special automorphism groups, since their images under the surjection to the corresponding general linear group also have determinant 1.

Therefore the inclusions of the (at most) four generators of $\A_n$ and the four generators of $\Mod(S_{k-1,1})$ together generate $\Aut^+(F_n)$ for $n \geq 2(k-1)$, and these are all of order $k$. Finally, when $k \geq 6$, recall that only three elements of order $k$ are needed to generate $\Mod(S_{k})$, and these elements also all contain two fixed points. So when $k \geq 6$ and $n \geq 2k$, seven elements of order $k$ suffice to generate $\Aut^+(F_n)$.

Since $\Aut^+(F_n)$ maps onto $\Out^+(F_n)$ and the kernel is torsion-free, the same number of elements of order $k$ under the same conditions generate $\Out^+(F_n)$. Additionally, since $\Out^+(F_n)$ maps onto $\SL(n,\mathbb{Z})$ \cite{Nielsen} and the kernel is torsion-free \cite{BT}, we also have that the same number of elements of order $k$ under the same conditions generate $\SL(n,\mathbb{Z})$.\end{proof}

\begin{proof}[Proof of Theorem \ref{thm:Sp}]
We have that $\Mod(S_g)$ maps onto $\Sp(2g,\mathbb{Z})$ \cite[Theorem~6.4]{Primer} and that the kernel is torsion-free \cite[Theorem~6.8]{Primer}. Therefore the images of the elements of order $k$ that generate $\Mod(S_g)$ also have order $k$ and generate $\Sp(2g,\mathbb{Z})$.
\end{proof}

\section{Further questions}
The proof of Lemma \ref{thm:mainlemma} provides concrete descriptions of some periodic elements that exist in $\Mod(S_g)$. It also provides an upper bound of $(k-1)(k-3)-1$ on the largest $g$ for which elements of order $k$ fail to exist in $\Mod(S_g)$. We would like to highlight a problem that has already received some attention in the literature.

\begin{problem} Give a formula in $k$ for the largest $g$ where elements of order $k$ fail to exist in $\Mod(S_g)$.
\end{problem}
Such a formula would provide a nice bookend to a formula derived by Harvey \cite{Harvey} that specifies the smallest $g$ where $\Mod(S_g)$ contains an element of order $k$. Problem 1 was solved for elements of prime order $p$ by Harvey \cite[Corollary 13]{Harveyprime} and by Glover and Mislin \cite[Lemma~3.3]{GM}. The formula in this case is $(p^2-4p+1)/2$, $p > 3$. Kulkarni and Maclachlan \cite{KulMac} solved Problem 1 for $k$ a prime power. O'Sullivan and Weaver \cite{OW} call this the largest non-genus problem and give bounds (and in some cases exact formulas) when $k$ is a product of two distinct odd primes. This problem is also sometimes called the stable upper genus problem in the literature.

Next, there are values of $g$ and $k$ that are not covered by our constructions but where elements of order $k$ exist in $\Mod(S_g)$. For instance, there are elements of order 7 in $\Mod(S_3)$.
\begin{problem} Extend Theorem \ref{thm:maintheorem} to all cases where elements of order $k$ exist in $\Mod(S_g)$.
\end{problem}

We can also seek smaller generating sets for $\Mod(S_g)$ consisting of elements of order $k$. We note that any sharpening Theorem \ref{thm:maintheorem} in terms of the number of elements required would seem to demand a new approach, due to the limited symmetries of a lantern.

\begin{question} For any fixed $k \geq 3$ and $g$ sufficiently large, can $\Mod(S_g)$ be generated by two elements of order $k$? What about three elements for orders 4 and 5?
\end{question}

We may of course ask the corresponding sharpening questions for the other groups we have considered. In particular, we would be glad to see the following conjecture resolved.

\begin{conjecture}
Let $k \geq 3$ and $n \geq k$. Then two elements of order $k$ suffice to generate $\Sigma_n$ when $k$ is even and to generate $\A_n$ when $k$ is odd.
\end{conjecture}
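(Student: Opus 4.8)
The plan is to imitate the proof of Proposition~\ref{thm:symmetric}, but with the third generator removed, so that the remaining two elements must jointly carry out its job. First I would reduce to $n \ge 2k$: for $k \le n \le 2k-1$, Miller's theorem \cite{Miller2} already supplies two $k$-cycles generating $\Sigma_n$ when $k$ is even and $\A_n$ when $k$ is odd, leaving only the range $n \ge 2k$ (together with a few small pairs $(k,n)$ that can be checked by machine). For $n \ge 2k$ I would seek two elements $a,b$ of order exactly $k$, each a product of disjoint step $k$-cycles in the spirit of the sequential step products $s_{k,n}(\cdot,\cdot)$, chosen so that: (i) the cycles of $a$ and of $b$ overlap and their supports cover $N=\{0,\dots,n-1\}$, forcing $\langle a,b\rangle$ to be transitive; (ii) a short word or commutator in $a,b$ collapses to a $3$-cycle (taking one of the generators itself when $k=3$), providing the small prime cycle Jordan's theorem requires; and (iii) when $k$ is even, $a$, $b$, or their product is an odd permutation. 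A concrete candidate is $a=s_{k,n}(0,\lfloor n/k\rfloor)$ together with a variant of $s_{k,n}(k-1,\cdot)$ that has been modified on a single one of its cycles—precomposed with a transposition-like twist built from another step $k$-cycle—so that $b$ still has order $k$ while $[a,b]$ has support confined to a short window of $N$ and equals a $3$-cycle there.

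The heart of the argument is then to promote transitivity to $2$-transitivity, after which Jordan's theorem forces $\langle a,b\rangle \in \{\A_n,\Sigma_n\}$. I would run the ``standard position'' argument from Proposition~\ref{thm:symmetric}: given distinct $i,j\in N$, first carry $i$ to a fixed marked point using the overlapping long cycles of $a$ and $b$, and then exhibit inside the stabilizer of that point two elements—natural analogues of $b$ and of $c^{-1}a$ from the three-generator proof—that act transitively on the remaining $n-1$ points, so that $j$ can be swept into the second marked position. With $2$-transitivity and the $3$-cycle in hand, the parity of $k$ finishes the job: if $k$ is even, $\langle a,b\rangle$ contains the odd permutation from (iii), hence equals $\Sigma_n$; if $k$ is odd, every element of order $k$ is even, hence $\langle a,b\rangle=\A_n$.

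The main obstacle is exactly the feature that Proposition~\ref{thm:symmetric} offloaded onto the third generator $c$: there, $c$ supplied both the small prime cycle (via $[a,c]$) and the ``local mobility'' near the standard position that made the point-stabilizer transitive on the rest. With only two generators, both rigidly pinned to order $k$, the two step-products must be interlaced so cleverly that their mutual interaction produces a $3$-cycle, a transitive point-stabilizer, and the correct parity simultaneously—and, crucially, via a single recipe uniform in $(k,n)$ rather than an ever-growing list of ad hoc cases. Finding and verifying such a recipe is the crux, which is why at present only computer confirmation for all tested $(k,n)$, plus separate treatment of a few small pairs (compare the $n\le 8$ exceptions in Miller's work), is available; the high-powered probabilistic and CFSG-based generation results do not obviously help, since they typically yield a generating pair with one element of prescribed order but the other of unconstrained order. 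A lighter secondary obstacle is the boundary bookkeeping—the split between $k\mid n$ and $k\nmid n$ already visible in the definition of $b$ in Proposition~\ref{thm:symmetric}, and the handling of the two marked points near the wraparound of $N$—which should be routine but fiddly.
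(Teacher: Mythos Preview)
This statement is a \emph{conjecture} in the paper, not a theorem; the paper offers no proof. Immediately after stating it, the authors present a candidate pair $a,b$ of order-$k$ permutations built from sequential step products $s_{k,n}(\cdot,\cdot)$, report computer verification for $3\le k\le 30$, $k\le n\le 200$ (with three small exceptions) and for 1000 further random pairs, and explicitly write: ``We have not, however, found a proof that $a$ and $b$ generate $\Sigma_n$ when $k$ is even and $\A_n$ when $k$ is odd.''

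Your proposal is therefore not to be compared against a proof in the paper---there is none---and you seem to be aware of this: you outline a strategy (reduce to $n\ge 2k$ via Miller, take $a=s_{k,n}(0,\lfloor n/k\rfloor)$ and a twisted variant of $s_{k,n}(k-1,\cdot)$, extract a $3$-cycle, establish $2$-transitivity, apply Jordan) but then correctly identify the crux as unresolved, namely producing a single uniform recipe whose commutator yields a $3$-cycle \emph{and} whose point-stabilizer is transitive, and you fall back on computer verification. This is exactly the state of affairs the paper describes, and your candidate construction is in the same spirit as theirs. In short, your proposal is an honest sketch of an approach with the gap clearly flagged, and the paper is in the same position; neither constitutes a proof.
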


We give here a candidate construction for resolving this conjecture. Consider the following pair of permutations on $N$ of order $k$.
\begin{align*}
    a &=s_{k,n}(0,\left\lfloor n/k\right\rfloor),\\
    b &=\begin{cases}
    (k-1 \ k \ k+1) s_{k,n}(k-1,\left\lfloor n/k\right\rfloor), &\text{if $k$ is odd, or $k$ is even and $\left\lfloor n/k\right\rfloor$ is odd}\\
    s_{k,n}(k\left\lfloor n/k\right\rfloor-1,\left\lfloor n/k\right\rfloor-1), &\text{if $k$ is even, $\left\lfloor n/k\right\rfloor$ is even, and $n \neq k-1$ mod $k$}\\
    d, &\text{if $k$ is even, $\left\lfloor n/k\right\rfloor$ is even, and $n = k-1$ mod $k$}
    \end{cases}
\end{align*}
where $d=s_{2,n}(k(\left\lfloor n/k\right\rfloor-1)-1,2)s_{k,n}(1,\left\lfloor n/k\right\rfloor-2)h_{k,n}(k\left\lfloor n/k\right\rfloor-1)$. Our computer calculations have verified that $a$ and $b$ generate $\Sigma_n$ when $k$ is even and $\A_n$ when $k$ is odd for all pairs $(k,n)$ where $n \geq k \geq 3$, $n \leq 200$ and $k \leq 30$, except for the three cases $(3,6)$, $(3,7)$, and $(3,8)$. These exceptional cases can be handled by a different construction. We have also checked the construction for 1000 additional random pairs of values where $k \leq n \leq 1000$. We have not, however, found a proof that $a$ and $b$ generate $\Sigma_n$ when $k$ is even and $\A_n$ when $k$ is odd.

It would in addition be interesting to determine the likelihood of generating $\Sigma_n$ or $\A_n$ with two (or more) random elements of order $k$ as $n$ goes to infinity, just as Dixon \cite{Dixon} determined for two random elements without order constraints. One could also take up the more restrictive case where the random elements are products of the maximum number of disjoint $k$-cycles. Showing that either $\Sigma_n$ or $\A_n$ is generated with positive probability could also be used as an approach to showing the existence of a generating set of two elements of order $k$.

\begin{problem}
Determine the probability of generating $\Sigma_n$ or $\A_n$ with a fixed number of elements of order $k$.
\end{problem}

The parallels often drawn between $\Mod(S_g)$ and $\Out^+(F_n)$ and $\Out(F_n)$ suggest the following problem.

\begin{problem}
Using Harvey's formula for $\Mod(S_g)$ as a model, produce formulas for $\Out^+(F_n)$ and $\Out(F_n)$ that give the smallest $n$ where an element of order $k$ first appears. Likewise, give formulas for the largest $n$ where these groups fail to contain an element of order $k$.
\end{problem}

Finally, it is known that every finite group embeds in some $\Mod(S_g)$ \cite[Theorem~7.12]{Primer}. One way to state our Lemma \ref{thm:mainlemma} is that for a given $k$, $\Mod(S_g)$ contains as a subgroup the cyclic group of order $k$ for sufficiently large $g$. Since this is true, one obstruction to embedding any particular finite group in $\Mod(S_g)$ for all sufficiently large $g$ is removed.

\begin{question}
For a fixed finite group $G$, does every $\Mod(S_g)$ contain $G$ as a subgroup for sufficiently large $g$? Whenever this is the case, can a formula be given for the largest $g$ where $\Mod(S_g)$ fails to contain $G$ as a subgroup? Whenever $G$ does exist as a subgroup, can it be shown that the elements in a small number of conjugates of $G$ generate all of $\Mod(S_g)$? 
\end{question}

A fundamental result in this direction was shown by Kulkarni \cite{Kulk}: for any finite group $G$, the $g$ for which $G$ acts faithfully on $S_g$ all fall in some infinite arithmetic progression; and further, all but finitely many values in the arithmetic progression are admissible $g$. Additionally, some results in this direction for finite subgroups of $\SO(3,\mathbb{R})$ have been proved by Tucker \cite{Tucker}.

\bibliographystyle{plain}
\bibliography{main}

\end{document}